\newcounter{todocounter}
\newtheorem{theorem}[subsection]{Theorem}
\newtheorem{lemma}[subsection]{Lemma}
\newtheorem{proposition}[subsection]{Proposition}
\newtheorem{corollary}[subsection]{Corollary}
\theoremstyle{definition}
\newtheorem{definition}[subsection]{Definition}
\newtheorem{example}[subsection]{Example}
\newtheorem{remark}[subsection]{Remark}
\newtheorem{assumption}[subsection]{Assumption}
\numberwithin{equation}{subsection}
\newcommand{\M}{\mathcal{M}}
\newcommand{\rkm}{R_K \M}
\newcommand{\C}{\mathcal{C}}
\newcommand{\D}{\mathcal{D}}
\renewcommand{\tilde}[1]{\widetilde{#1}}
\newcommand{\Ho}{\mathsf{Ho}}
\newcommand{\po}{\ar@{}[dr]|(.7){\Searrow}}
\newcommand{\pb}{\ar@{}[dr]|(.3){\Nwarrow}}
\DeclareMathOperator{\map}{map}
\newcommand{\mapm}{\map_{\M}}
\newcommand{\mapalgtm}{\map_{\algtm}}
\newcommand{\Ch}{\mathsf{Ch}}
\newcommand{\Chk}{\Ch(k)}
\tikzset{auto}
\tikzset{empty/.style={circle,inner sep=0pt,minimum size=6mm}}
\tikzset{emptyvt/.style={circle,inner sep=0pt,minimum size=0mm}}
\tikzset{plain/.style={circle,draw,very thick,
inner sep=0pt,minimum size=6mm}}
\tikzset{fatplain/.style={rounded rectangle,draw,very thick,minimum size=6mm}}
\tikzset{bigplain/.style={rounded rectangle,draw,very thick,minimum size=.8cm}}
\tikzset{yellowvt/.style={circle,draw,fill=yellow,very thick,inner sep=0pt,minimum size=6mm}}
\tikzset{bluevt/.style={circle,draw,fill=blue!20,very thick,inner sep=0pt,minimum size=6mm}}
\tikzset{greenvt/.style={circle,draw,fill=green!30,very thick,inner sep=0pt,minimum size=6mm}}
\tikzset{redvt/.style={circle,draw,fill=red!30,very thick,inner sep=0pt,minimum size=6mm}}
\tikzset{arrow/.style={->,thick}}
\tikzset{dashedarrow/.style={->,dashed,thick}}
\tikzset{dottedarrow/.style={->,dotted,thick}}
\tikzset{mapto/.style={|->,thick}}
\tikzset{implies/.style={thick,double,double equal sign distance,-implies}}
\tikzset{line/.style={thick}}
\tikzset{dottedline/.style={dotted,thick}}
\tikzset{dashedline/.style={dashed,thick}}
\tikzset{inputleg/.style={<-,thick}}
\tikzset{outputleg/.style={->,thick}}
\tikzset{dottedinput/.style={<-,dotted,thick}}
\newcommand{\adjoint}{
\nicearrow\xymatrix{ \ar@<2pt>[r] & \ar@<2pt>[l]}}
\renewcommand{\hookrightarrow}{\nicexy{\ar@{^{(}->}[r] &}}
\newcommand{\nicearrow}{\SelectTips{cm}{10}}
\newcommand{\nicexy}{\nicearrow\xymatrix@C+5pt}
\newcommand{\pushout}{\ar@{}[dr]|(0.75){\Searrow}}
\newcommand{\drrpushout}{\ar@{}[drr]|(0.90){\Searrow}}
\renewcommand{\to}{\hspace{-.1cm}\nicearrow\xymatrix@C-.2cm{\ar[r]&}\hspace{-.1cm}}
\newcommand{\sO}{\mathsf{O}}
\newcommand{\xtilde}{\widetilde{X}}
\newcommand{\uc}{\underline{c}}
\newcommand{\sset}{\mathsf{sSet}}
\newcommand{\Top}{\mathsf{Top}}
\newcommand{\alg}{\mathsf{Alg}}
\newcommand{\algtm}{\alg(T;\M)}
\newcommand{\algtrkm}{\alg(T;\rkm)}
\newcommand{\rtkalgtm}{R_{T(K)}\algtm}
\newcommand{\smallbinom}[2]
{\raisebox{.05cm}{\scalebox{0.8}{$\binom{#1}{#2}$}}}
\renewcommand{\lim}{\mathsf{lim}\,}
\DeclareMathOperator{\Id}{Id}
\newcommand{\deltan}{\Delta[n]}
\newcommand{\ddeltan}{\partial\Delta[n]}
\newcommand{\Lambdak}{\Lambda(K)}
\newcommand{\Lambdakbar}{\overline{\Lambdak}}
\renewcommand{\diamond}{\blacklozenge}
\newtheorem{main}{Theorem}
\begin{document}

\title{Right Bousfield Localization and Eilenberg-Moore Categories}

\author{David White}
\address{Department of Mathematics and Computer Science \\ Denison University
\\ Granville, OH}
\email{david.white@denison.edu}

\author{Donald Yau}
\address{Department of Mathematics \\ The Ohio State University at Newark \\ Newark, OH}
\email{dyau@math.osu.edu}

\begin{abstract}
We compare several recent approaches to studying right Bousfield localization and algebras over monads. We prove these approaches are equivalent, and we apply this equivalence to obtain several new results regarding right Bousfield localizations (some classical, some new) for spectra, spaces, equivariant spaces, chain complexes, simplicial abelian groups, and the stable module category. En route, we provide conditions so that right Bousfield localization lifts to categories of algebras, so that right Bousfield localization preserves algebras over monads, and so that right Bousfield localization forms a compactly generated model category.
\end{abstract}

\maketitle

\tableofcontents

\section{Introduction}

Bousfield localization is a valuable technique for creating new homotopy theories; it has been extensively studied in the context of model categories, infinity categories, triangulated categories, classical homotopy theory, and group theory. Both left and right Bousfield localization invert morphisms: in the context of model categories both results in a larger class of weak equivalences. Left Bousfield localization begins with a model category $\M$ and a set of maps $\C$, and results in a new model structure $L_\C \M$ on the same category, with the same cofibrations, where $\C$ is now contained in the new weak equivalences. Right Bousfield localization (also known as cellularization or colocalization) begins with a model category $\M$ and a set of objects $K$, and results in a new model structure $\rkm$, with the same fibrations, where $K$ is now contained in the class of cofibrant objects and where morphisms seen to be weak equivalences by $K$ (via homotopy function objects) are now weak equivalences. In both cases, hypotheses are needed on $\M$ to guarantee existence of the localization.

Left Bousfield localization is better behaved than right Bousfield localization. For example, $L_\C(\M)$ is guaranteed to be cofibrantly generated if $\M$ is, but the same is not true for $\rkm$. Nevertheless, right Bousfield localization plays an important role in homotopy theory dating back to CW approximation (more generally, $A$-cellular homotopy theory \cite{chacholski-thesis}), $n$-connected covers and Postnikov pieces \cite{nofech}, and finding point-set models in chain complexes and $R$-modules for localizing subcategories in the derived category of $R$ and the stable module category of $R$. A comprehensive list of applications can be found in \cite{white-yau2}. Due to the asymmetry between left and right Bousfield localization, more attention has been paid in the literature to left localization, and several different approaches to right localization have emerged (e.g. \cite{hirschhorn}, \cite{barwickSemi}, \cite{christensen-isaksen}).

This paper unifies several different approaches to questions related to preservation of algebraic structure under right Bousfield localization. All approaches considered are recalled here, so this paper can be read as a stand-alone paper. The approaches considered include transferring a semi-model structure from $\rkm$ to $\algtrkm$ in such a way that the forgetful functor preserves cofibrant objects \cite{white-yau2}, $R_K$ preserving $T$-algebras \cite{white-yau2}, lifting the right Bousfield localization functor $R_K$ to the homotopy category of $T$-algebras (where $T$ is a monad) \cite{crt}, and the forgetful functor taking $T(K)$-colocal equivalences to $K$-colocal equivalences \cite{grso}. The main goal is to prove the following theorem, which contains a converse to the main result of \cite{white-yau2}.

\begin{main} \label{thm:main}
Under Assumption \ref{standing.assumptions} suppose further that $\algtrkm$ is semi-admissible over $\rkm$.  Then the following statements are equivalent.
\begin{enumerate}
\item The forgetful functor
\[\nicexy{\algtrkm = \rtkalgtm \ar[r]^-{U} & \rkm}\]
preserves weak equivalences and cofibrant objects, in which the equality on the left is from Corollary \ref{cor:lifting.transfer}.
\item $R_K$ preserves $T$-algebras (Def. \ref{def:preserves-algebra}).
\item $R_K$ lifts to the homotopy category of $T$-algebras (Def. \ref{def:rk.lifts}).
\item The forgetful functor $U$ preserves right Bousfield localization (Def. \ref{def:grso.preserve}).
\end{enumerate}
\end{main}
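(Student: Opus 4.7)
My plan is to establish the cycle of implications $(1) \Rightarrow (2) \Rightarrow (3) \Rightarrow (4) \Rightarrow (1)$. The four conditions are ordered from the most concrete (point-set preservation by $U$) to the most structural (preservation of the localization procedure itself), so the three forward steps should amount mostly to unpacking definitions and invoking the homotopical compatibilities already established earlier in the paper. The closing implication is where the real work lies.

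For $(1) \Rightarrow (2)$, given a $T$-algebra $X$ I would take a cofibrant replacement $\widetilde X \to X$ in $\algtrkm = \rtkalgtm$, which exists by the semi-admissibility hypothesis. By (1), $U(\widetilde X)$ is cofibrant in $\rkm$ and $U(\widetilde X) \to X$ is a $K$-colocal weak equivalence, so the underlying object of a model for $R_K X$ carries a $T$-algebra structure; this is Definition \ref{def:preserves-algebra}. For $(2) \Rightarrow (3)$, functoriality of cofibrant replacement in $\algtrkm$ combined with (2) produces a functor on $\ho(\algtm)$ compatible with $R_K$ on $\ho(\M)$, which is exactly Definition \ref{def:rk.lifts}. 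For $(3) \Rightarrow (4)$, a homotopy-category lift of $R_K$ compatible under $U$ with $R_K$ on $\ho(\M)$ forces $U$ to send $T(K)$-colocal weak equivalences to $K$-colocal weak equivalences, since these are precisely the maps detected after colocalization; this is Definition \ref{def:grso.preserve}.

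The main obstacle is $(4) \Rightarrow (1)$, which closes the cycle. Preservation of right Bousfield localization in the sense of \cite{grso} immediately yields that $U$ takes weak equivalences of $\rtkalgtm$ to weak equivalences of $\rkm$. To recover preservation of cofibrant objects, I would resolve a cofibrant $T$-algebra $A$ in $\algtrkm$ as a retract of a transfinite cell attachment built from the generating cofibrations furnished by the semi-admissibility hypothesis, and then argue that $U$ carries each cell attachment to a pushout in $\rkm$ along a cofibration whose source is already cofibrant. The description of cofibrant objects of $\rkm$ in terms of the set $K$, together with the preservation of weak equivalences just established, then implies $U(A)$ is cofibrant in $\rkm$. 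The subtle point here is ensuring that the cells appearing in the resolution of $A$ have underlying objects that are cofibrant in $\rkm$ rather than merely in $\M$; this is where I expect to rely most heavily on the interplay between the semi-admissibility of $\algtrkm$ over $\rkm$ and the grso-style preservation provided by (4).
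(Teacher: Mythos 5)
Your forward steps $(1)\Rightarrow(2)\Rightarrow(3)$ follow the paper's route (cofibrant replacement in $\algtrkm$, then a descent to homotopy categories), with fillable omissions: Definition \ref{def:preserves-algebra} demands that $U(\tilde X)$ be weakly equivalent \emph{in $\M$} to $R_KUX$ via a specified map $\beta_X$ compatible with $q$ and $Uc_X$, so you must actually construct $\beta_X$ by lifting against the trivial fibration $q : R_KUX \to UX$ and then upgrade the resulting $K$-colocal equivalence between $K$-colocal objects to a genuine weak equivalence in $\M$ using \cite{hirschhorn} (3.2.13(2)); similarly $(2)\Rightarrow(3)$ requires verifying $Uc^T \circ h = cU$, and uniqueness and naturality of $h$ in $\Ho(\M)$, which the paper gets from the universal property of right localization. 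The genuine gap is in $(3)\Rightarrow(4)$: under Assumption \ref{standing.assumptions}, Lemma \ref{tk.colocal.eq} already shows, unconditionally, that $U$ sends $T(K)$-colocal equivalences to $K$-colocal equivalences, so the ``maps detected after colocalization'' part of your argument proves only what is automatic. The entire content of Definition \ref{def:grso.preserve} is the claim about the \emph{domain}: that $U$ carries the $T(K)$-colocal object $R_{T(K)}X$ to a $K$-colocal object of $\M$. Your sketch says nothing about objects; extracting this from (3) requires combining the natural isomorphism $h : R_KU \to UC^T$ with the identification $\algtrkm = \rtkalgtm$ of Corollary \ref{cor:lifting.transfer}, which is essentially the paper's implication $(3)\Rightarrow(1)$ (via \cite{crt}, Theorem 7.10(iii)).

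Your closing step $(4)\Rightarrow(1)$ rests on a false mechanism. The forgetful functor does not carry cell attachments in $\algtrkm$ to cofibrations in $\rkm$: the underlying map of a pushout of a free map $T(f)$ lands only in the saturated class $C$ of Section \ref{sec:admissibility} -- that is the entire reason $C$-admissibility exists as a notion -- and for typical monads (commutative ring spectra, $E_\infty$-algebras) $U$ of a cofibration is simply not a cofibration. Moreover, semi-admissibility does not furnish generating cofibrations; cofibrant generation is a separate, additional property in Definition \ref{defn:semi-model-cat}. If your cellular induction worked, condition (1) would hold unconditionally and the theorem would be vacuous, since (1) is precisely the nontrivial condition being characterized. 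The repair is much cheaper than a cellular analysis: for a cofibrant $A \in \rtkalgtm$, lift $\id_A$ against the trivial fibration $c : R_{T(K)}A \to A$ (cofibrations lift against trivial fibrations in a semi-model category with no cofibrancy caveat), exhibiting $UA$ as a retract of $UR_{T(K)}A$, which is $K$-colocal by (4); since $K$-colocal objects are the cofibrant objects of $\rkm$ and are closed under retracts, $UA$ is cofibrant in $\rkm$, and weak-equivalence preservation is automatic because the weak equivalences of $\algtrkm$ are created by $U$. The paper avoids even this by closing the cycle differently: it proves $(1)\Rightarrow(4)$ directly (trivial, since cofibrant domains go to cofibrant domains) and then $(4)\Rightarrow(2)$, taking $\tilde X = R_{T(K)}X$, invoking (4) for colocality of $UR_{T(K)}X$, lifting to produce $\beta$, and applying 2-out-of-3 plus \cite{hirschhorn} (3.2.13(2)).
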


In Section \ref{sec:liftingRight}, we provide preliminaries regarding right Bousfield localization and a proof of the equality in (1) above, where $T(K)$ denotes the free $T$-algebras on the objects $K$. Often, transferring model structures to categories of $T$-algebras such as $\algtrkm$ yields only semi-model category structures (see Example 2.8 in \cite{batanin-white} for a case where the transfer is provably not a model structure), so we recall semi-model categories in Section \ref{sec:liftingRight}, and we pay particular attention throughout this paper to highlighting the differences between semi-model categories and model categories throughout. Semi-admissibility in the theorem above refers to $\algtrkm$ having a semi-model structure, which is a strictly weaker condition than having a model structure.
In Section \ref{sec:admissibility}, we prove Theorem \ref{algtrkm.exists}, which provides conditions under which $\algtrkm$ has a transferred (semi-)model structure from $\rkm$, such that (1) holds. This result is of independent interest, as it allows for the study of algebras in a colocalized setting.
In Section \ref{sec:compactGeneration}, we prove a technical result regarding when $\rkm$ is compactly generated, a requirement for Theorem \ref{algtrkm.exists}. In Section \ref{sec:equivalentApproaches}, we recall the definitions required for (2), (3), and (4) above; then we prove Theorem \ref{thm:main}. Lastly, in Section \ref{sec:applications} we provide numerous applications of Theorem \ref{thm:main} to spectra, (equivariant) topological spaces, chain complexes, and the stable module category.

This paper can be viewed as the dual of \cite{batanin-white}, which unified approaches for left Bousfield localization, but the methods are far from formally dual. In particular, more care must be taken regarding the properties of $\rkm$ (e.g. Section \ref{sec:compactGeneration}). Additionally, there are instances where the asymmetry works to make right Bousfield localization \emph{easier} than left Bousfield localization. For example, Theorem \ref{lifting.transfer} (and Corollary \ref{cor:lifting.transfer}) proves that whenever one of the (semi-)model structures $\algtrkm$ or $\rtkalgtm$ exists, then both exist and coincide. The corresponding results are false for left Bousfield localization (see Remark 5.7 in \cite{batanin-white}). The key reason is an adjunction argument that gives control over the $T(K)$-colocal equivalences, whereas for left Bousfield localization we only had information about local objects.

\section{Lifting Right Bousfield Localization to Eilenberg-Moore Categories}
\label{sec:liftingRight}

In this paper, we will be transferring model structures to categories of $T$-algebras, for various monads $T$. In practice, there is often not a full model structure on $T$-algebras, but rather only a semi-model structure (see Example 2.8 in \cite{batanin-white}). The following definition is from \cite{spitzweck-thesis}, where it is called a $J$-semi model category over $\M$. This notion also appeared in \cite{barwickSemi}. A weaker notion of semi-model category has appeared in \cite{fresse-book}, analogous to what Spitzweck called an $(I,J)$-semi model category. The definition provided here is the most structure we can transfer to $T$-algebras, and all the results from \cite{fresse-book} remain true, since Fresse requires less structure on $\D$.

\begin{definition}
\label{defn:semi-model-cat}
Assume there is an adjunction $F:\M \adjoint \D:U$ where $\M$ is a cofibrantly generated model category, $\D$ is bicomplete, and the right adjoint $U$ preserves colimits over non-empty ordinals. 

We say that $\D$ is a \textbf{semi-model category} if $\D$ has three classes of morphisms called \emph{weak equivalences}, \emph{fibrations}, and \emph{cofibrations} such that the following axioms are satisfied. A \emph{cofibrant} object $X$ means an object in $\D$ such that the map from the initial object of $\D$ to $X$ is a cofibration in $\D$. Likewise, a \emph{fibrant} object is an object for which the map to the terminal object in $\D$ is a fibration in $\D$.

\begin{enumerate}
\item $U$ preserves and reflects fibrations and trivial fibrations ($=$ maps that are both weak equivalences and fibrations).
\item $\D$ satisfies the 2-out-of-3 axiom and the retract axiom.
\item Cofibrations in $\D$ have the left lifting property with respect to trivial fibrations. Trivial cofibrations ($=$ maps that are both weak equivalences and cofibrations) in $\D$ whose domain is cofibrant in $\M$ have the left lifting property with respect to fibrations.
\item Every map in $\D$ can be functorially factored into a cofibration followed by a trivial fibration. Every map in $\D$ whose domain is cofibrant in $\M$ can be functorially factored into a trivial cofibration followed by a fibration.
\item The initial object in $\D$ is cofibrant in $\M$.
\item Fibrations and trivial fibrations are closed under pullback.
\end{enumerate}

Denote by $I'$-inj the class of maps that have the right lifting property with respect to maps in $I'$. $\D$ is said to be \textit{cofibrantly generated} if there are sets of morphisms $I'$ and $J'$ in $\D$ such that the following conditions are satisfied.
\begin{enumerate}
\item
$I'$-inj is the class of trivial fibrations.
\item
$J'$-inj is the class of fibrations in $\D$.
\item
The domains of $I'$ are small relative to $I'$-cell.
\item
The domains of $J'$ are small relative to maps in $J'$-cell whose domain is sent by $U$ to a cofibrant object in $\M$.
\end{enumerate}
\end{definition}

Every model category is a semi-model category. A semi-model category in which all objects are cofibrant is a model category. In practice, anything that can be done in a model category can be done in a semi-model category, if one first cofibrantly replaces everything in sight. Several examples of the uses of semi-model categories can be found in \cite{goerss-hopkins-moduli-problems}, \cite{barwickSemi}, \cite{spitzweck-thesis}, \cite{ekmm}, \cite{white-localization}, \cite{white-commutative-monoids}, \cite{white-yau1}, \cite{white-yau2}, and \cite{gutierrez-rondigs-spitzweck-ostvaer} among other places.

Recall that a model category structure is completely determined by the classes of weak equivalences and of fibrations \cite{hirschhorn} (Prop. 7.2.7). The same is true for semi-model categories, since a map is a cofibration if and only if it satisfies the left lifting property with respect to all trivial fibrations \cite{barwickSemi} (Lemma 1.7). A left Quillen functor between semi-model categories is a functor whose right adjoint preserves fibrations and trivial fibrations.
 
\begin{definition}
Suppose $\M$ is a model category, $T$ is a monad on $\M$, and $\algtm$ is the category of $T$-algebras in $\M$.  We say that $\algtm$ is \emph{(semi-)admissible over $\M$} if it admits a (semi-)model category structure in which a map $f$ is a weak equivalence (resp., fibration) if $Uf \in \M$ is a weak equivalence (resp., fibration), where $U : \algtm \to \M$ is the forgetful functor.
\end{definition}

The homotopy function complex in a model category $\M$ is denoted $\mapm$.  Specifically, we will use the right homotopy function complex \cite{hirschhorn} (Def. 17.2.1).

\begin{definition}
Suppose $\M$ is a model category, and $K \subseteq \M$ is a set of cofibrant objects.  
\begin{enumerate}
\item A \emph{$K$-colocal equivalence} is a map $f : A \to B \in \M$ such that the induced map
\[\nicexy@C+1cm{\mapm(X,A) \ar[r]^-{\mapm(X,f)} & \mapm(X,B)}\]
is a weak equivalence of simplicial sets for all $X \in K$.
\item A \emph{$K$-colocal object} is a cofibrant object $Y$ in $\M$ such that the induced map
\[\nicexy@C+1cm{\mapm(Y,A) \ar[r]^-{\mapm(Y,f)} & \mapm(Y,B)}\]
is a weak equivalence of simplicial sets for all $K$-colocal equivalences $f : A \to B$.
\item Define a new category $\rkm$ as being the same as $\M$ as a category, together with the following distinguished classes of maps. A map $f$ in $\rkm$ is called a:
\begin{itemize}
\item \emph{weak equivalence} if it is a $K$-colocal equivalence.
\item \emph{fibration} if it is a fibration in $\M$.
\end{itemize}
\item If $\rkm$ is a model category with these weak equivalences and fibrations, then it is called the \emph{right Bousfield localization of $\M$ with respect to $K$} \cite{hirschhorn} (Def. 3.3.1(2) and Theorem 5.1.1).  In this case, $K$-colocal objects are precisely the cofibrant objects in $\rkm$.
\end{enumerate}
\end{definition}

\begin{assumption}\label{standing.assumptions}
Suppose $\M$ is a model category, $K \subseteq \M$ is a set of cofibrant objects such that the right Bousfield localization $\rkm$ exists, and $T$ is a monad on $\M$ such that $\algtm$ is semi-admissible over $\M$.
\end{assumption}

\begin{lemma}\label{tk.colocal.eq}
Under Assumption \ref{standing.assumptions} a map $f$ in $\algtm$ is a $T(K)$-colocal equivalence if and only if $Uf \in \M$ is a $K$-colocal equivalence.
\end{lemma}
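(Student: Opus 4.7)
The plan is to leverage the free--forgetful adjunction $T \colon \M \rightleftarrows \algtm \colon U$ together with the definition of semi-admissibility in order to translate the $T(K)$-colocal condition on a morphism $f \in \algtm$ into the $K$-colocal condition on $Uf \in \M$ via homotopy function complexes. Semi-admissibility ensures that $U$ preserves and reflects weak equivalences, fibrations, and trivial fibrations; consequently $T$ is a left Quillen functor, and in particular each $T(X)$ with $X \in K$ is cofibrant in $\algtm$, which is what allows one to speak of $T(K)$-colocal equivalences in $\algtm$ in the first place.

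The heart of the argument is to exhibit, for every cofibrant $X \in \M$ and every $A \in \algtm$, a natural isomorphism of simplicial sets
\[
\mapalgtm\bigl(T(X), A\bigr) \;\cong\; \mapm\bigl(X, UA\bigr).
\]
I would choose a simplicial resolution $\hat{A}$ of $A$ in $\algtm$ and then argue that $U\hat{A}$ is a simplicial resolution of $UA$ in $\M$: as a right adjoint, $U$ preserves all limits and hence Reedy matching objects, so together with preservation of fibrations it carries the Reedy fibrancy of $\hat{A}$ over to $U\hat{A}$; preservation of weak equivalences by $U$ then makes the comparison map from the constant simplicial object on $UA$ into $U\hat{A}$ a Reedy weak equivalence. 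Degreewise application of the hom-set adjunction $\hom_{\algtm}(T(X), -) \cong \hom_{\M}(X, U-)$ then assembles into the displayed simplicial isomorphism.

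Granting this, the lemma falls out at once: $f$ is a $T(K)$-colocal equivalence iff $\mapalgtm(T(X), f)$ is a weak equivalence of simplicial sets for every $X \in K$, iff by the displayed isomorphism $\mapm(X, Uf)$ is a weak equivalence for every $X \in K$, iff $Uf$ is a $K$-colocal equivalence in $\M$. The main technical hurdle is the identification of homotopy function complexes in the semi-model setting, which reduces to checking that $U$ sends simplicial resolutions in $\algtm$ to simplicial resolutions in $\M$; this is controlled entirely by the preservation of Reedy fibrations and Reedy weak equivalences, both of which follow directly from semi-admissibility.
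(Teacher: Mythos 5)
Your proposal is correct and takes essentially the same route as the paper: both arguments identify $\mapalgtm(T(X),-)$ with $\mapm(X,U(-))$ by choosing a simplicial resolution in $\algtm$, observing that $U$ carries it to a simplicial resolution in $\M$ (since fibrations and weak equivalences in $\algtm$ are created by $U$), applying the free--forgetful adjunction degreewise, and using that $T(X)$ is cofibrant because $T$ is left Quillen. The only cosmetic difference is that the paper resolves the map $f$ itself rather than the objects separately --- which packages the naturality you invoke at the end --- and cites Barwick's Scholium 3.64 for the identification of homotopy function complexes in the semi-model setting, the technical point you correctly flagged as the main hurdle.
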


\begin{proof}
Suppose $f : A \to B$ is a map of $T$-algebras in $\M$, and $f_\bullet : A_{\bullet} \to B_{\bullet}$ is a simplicial resolution of $f \in \algtm$ (\cite{hirschhorn} Def. 16.1.2(2) and Prop. 16.1.22(2) for model categories, \cite{barwickSemi} Theorem 3.12 for semi-model categories).  Applying the forgetful functor $U : \algtm \to \M$ entrywise to $f_{\bullet}$ yields a simplicial resolution of $Uf : UA \to UB \in \M$ because weak equivalences and fibrations in $\algtm$ are defined by the forgetful functor $U$.  

Pick any object $X \in K$.   Note that $T(X)$ is a cofibrant object in $\algtm$ because $X \in \M$ is cofibrant and $T : \M \to \algtm$ is a left Quillen functor.  There is a commutative diagram of simplicial sets using \cite{barwickSemi} (Scholium 3.64):
\[\nicexy@C+2cm{
\mapalgtm(T(X),A) \ar@{=}[d] \ar[r]^-{\mapalgtm(T(X),f)} & \mapalgtm(T(X),B) \ar@{=}[d]\\
\algtm\bigl(T(X),A_{\bullet}\bigr) \ar[d]_-{\cong} \ar[r]^-{\algtm(T(X),f_{\bullet})} & \algtm\bigl(T(X), B_{\bullet}\bigr) \ar[d]_-{\cong}\\
\M\bigl(X,(UA)_{\bullet}\bigr) \ar@{=}[d] \ar[r]^-{\M(X,Uf_{\bullet})} & \M\bigl(X,(UB)_{\bullet}\bigr) \ar@{=}[d]\\
\mapm(X,UA) \ar[r]^-{\mapm(X,Uf)} & \mapm(X,UB)}\] 
The middle isomorphisms are given by the free-forgetful adjunction between $\M$ and $\algtm$ and the remark above about $U(f_{\bullet}) = (Uf)_{\bullet}$ being a simplicial resolution of $Uf$.  Now the map $f \in \algtm$ is a $T(K)$-colocal equivalence if and only if the top horizontal map in the above diagram is a weak equivalence for all $X \in K$.  By commutativity this is equivalent to the bottom horizontal map being a weak equivalence for all $X \in K$.  This in turn is equivalent to $Uf$ being a $K$-colocal equivalence in $\M$.
\end{proof}

The following result has versions for both semi-model structures and full model structures. We prove the latter first, as the statement is valuable in its own right, and the proof will be easier for the reader to follow.

\begin{theorem}\label{lifting.transfer}
Under Assumption \ref{standing.assumptions} and the additional assumption that $T$ is admissible over $\M$, the following two statements are equivalent.
\begin{enumerate}
\item $\algtrkm$ is admissible over $\rkm$.
\item The right Bousfield localization $\rtkalgtm$ exists.
\end{enumerate}
Furthermore, if either statement is true, then the model categories $\algtrkm$ and $\rtkalgtm$ are equal.
\end{theorem}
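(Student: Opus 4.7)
The plan is to show that both purported model structures $\algtrkm$ and $\rtkalgtm$, whenever one of them exists, share the same underlying category, the same class of weak equivalences, and the same class of fibrations; the result will then follow from the fact that a model structure is determined by its weak equivalences and fibrations (\cite{hirschhorn} Prop.~7.2.7). Thus the equivalence (1) $\Leftrightarrow$ (2) and the identification of the two model categories will be proved simultaneously.

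First I would unwind the definitions to pin down what each side means on the underlying category $\algtm$. The category $\rkm$ equals $\M$ on the nose, so the category underlying $\algtrkm$ is just $\algtm$, which is also the category underlying $\rtkalgtm$. Next, for fibrations: by admissibility of $\algtrkm$ over $\rkm$, a map $f$ is a fibration there iff $Uf$ is a fibration in $\rkm$, which by definition of right Bousfield localization means $Uf$ is a fibration in $\M$. For $\rtkalgtm$, a map $f$ is a fibration iff it is a fibration in $\algtm$, which by the hypothesis that $\algtm$ is admissible means $Uf$ is a fibration in $\M$. So the fibrations agree without invoking anything beyond the definitions.

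The comparison of weak equivalences is exactly the content of Lemma \ref{tk.colocal.eq}: in $\algtrkm$, $f$ is a weak equivalence iff $Uf$ is a $K$-colocal equivalence in $\M$; in $\rtkalgtm$, $f$ is a weak equivalence iff $f$ is a $T(K)$-colocal equivalence in $\algtm$. The lemma identifies these two conditions. At this point both structures coincide as soon as one of them is known to be a model structure, since weak equivalences and fibrations match and model structures are uniquely determined by this data. So for (1) $\Rightarrow$ (2): given the model structure $\algtrkm$, observe that it is a model structure on $\algtm$ whose fibrations are the fibrations of $\algtm$ and whose weak equivalences are the $T(K)$-colocal equivalences; this is by definition the right Bousfield localization $\rtkalgtm$. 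Conversely for (2) $\Rightarrow$ (1): given $\rtkalgtm$, its weak equivalences are, by the lemma, the maps $f$ with $Uf$ a $K$-colocal equivalence, and its fibrations are the maps $f$ with $Uf$ a fibration in $\M = \rkm$, which is exactly the transferred structure from $\rkm$.

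I do not expect any serious obstacle: the work has already been front-loaded into Lemma \ref{tk.colocal.eq}, whose proof uses the free-forgetful adjunction together with the fact that applying $U$ entrywise to a simplicial resolution in $\algtm$ yields a simplicial resolution in $\M$. The only point that demands a moment's care is the asymmetric role of the hypothesis that $\algtm$ is (fully) admissible: this is what guarantees that fibrations in $\algtm$ really are precisely the maps $f$ with $Uf$ a fibration in $\M$, and that the forgetful/free adjunction behaves well enough for the cited scholium of \cite{barwickSemi} to apply in the proof of the lemma. With full admissibility in force, the argument is purely formal.
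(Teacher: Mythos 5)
Your proposal is correct and takes essentially the same approach as the paper: the paper's proof likewise notes that both structures live on the underlying category $\algtm$, have the same fibrations (the fibrations of $\algtm$), and the same weak equivalences by Lemma \ref{tk.colocal.eq}, relying on the fact (recalled earlier in the paper from \cite{hirschhorn} Prop.~7.2.7) that a model structure is determined by its weak equivalences and fibrations. Your write-up simply makes explicit the definitional unwinding that the paper leaves implicit.
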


Theorem \ref{lifting.transfer} says that, in the diagram
\begin{equation}\label{fundamental.diagram}
\nicexy@C+1cm{\algtm \ar@{|.>}[r]^-{R_{T(K)}}_-{\text{exists $?$}} & \rtkalgtm \ar@{=}[r]^-{?} & \algtrkm\\
\M \ar@{|->}[u]^-{\text{transfer}} \ar@{|->}[rr]^-{R_K} && \rkm \ar@{|.>}[u]^-{\text{transfer}}_-{\text{exists $?$}}}
\end{equation}
the ability to go counter-clockwise is equivalent to the ability to go clockwise.  Furthermore, when either one is possible, the results are equal.

\begin{proof}
The categories $\rtkalgtm$ and $\algtrkm$ are both equal to the category $\algtm$.  Observe that $\algtrkm$ and $\rtkalgtm$ have the same fibrations, namely, fibrations in $\algtm$.  Moreover, Lemma \ref{tk.colocal.eq} says that $\algtrkm$ and $\rtkalgtm$ have the same weak equivalences. 
\end{proof}

\begin{remark}
In \cite{crt} (7.7) the equality
\begin{equation}\label{same.model}
\algtrkm = \rtkalgtm
\end{equation}
of model categories was observed under assumption (2)--i.e., $\rtkalgtm$ exists--\emph{and} that $R_K$ lifts to the homotopy category of $T$-algebras (see Def. \ref{def:rk.lifts}).  Moreover, the same equality was also observed in \cite{grso} when $T$ is the monad of a colored operad.
\end{remark}

The proof of Theorem \ref{lifting.transfer} also proves the following corollary, since Lemma \ref{tk.colocal.eq} also holds for semi-model categories. By (2) below, we mean that the three classes of maps defining $\rtkalgtm$ \cite{hirschhorn} satisfy the axioms of a semi-model category.

\begin{corollary}\label{cor:lifting.transfer}
Under Assumption \ref{standing.assumptions}, the following two statements are equivalent.
\begin{enumerate}
\item $\algtrkm$ is semi-admissible over $\rkm$.
\item The right Bousfield localization $\rtkalgtm$ exists as a semi-model category.
\end{enumerate}
Furthermore, if either statement is true, then the semi-model categories $\algtrkm$ and $\rtkalgtm$ are equal.
\end{corollary}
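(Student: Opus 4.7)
The plan is to mimic the proof of Theorem \ref{lifting.transfer} essentially verbatim, observing that the only inputs used there were Lemma \ref{tk.colocal.eq} and the fact that a model structure is determined by its fibrations and weak equivalences. Lemma \ref{tk.colocal.eq} already holds in the semi-admissible setting by construction, since its statement and proof were given under Assumption \ref{standing.assumptions} alone, using simplicial resolutions in $\algtm$ that Barwick (Theorem 3.12) provides for semi-model categories; and the analogous statement for semi-model structures — that a semi-model structure is determined by its fibrations and weak equivalences — follows from Barwick's Lemma 1.7 (as already cited in the paper) characterizing cofibrations by the left lifting property against trivial fibrations.

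Concretely, I would first note that $\rtkalgtm$ and $\algtrkm$ share the underlying category $\algtm$. Next I would compare fibrations: in $\rtkalgtm$ these are by definition the fibrations of $\algtm$, while in $\algtrkm$ these are maps $f$ such that $Uf$ is a fibration in $\rkm$; but a fibration in $\rkm$ is simply a fibration in $\M$, and by semi-admissibility of $\algtm$ over $\M$ this is exactly the class of fibrations in $\algtm$. I would then compare weak equivalences: $\algtrkm$ has those $f$ with $Uf$ a $K$-colocal equivalence, while $\rtkalgtm$ has the $T(K)$-colocal equivalences, and Lemma \ref{tk.colocal.eq} identifies these two classes. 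Hence both pairs (fibrations, weak equivalences) coincide, so if either semi-model structure exists the other does too, and the two structures are equal.

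The step requiring the most care — and the one I anticipate as the main obstacle — is not any of the above identifications but rather the coherence of the cofibrant-domain restrictions appearing in the semi-model category axioms. The adjunction that witnesses (1) is $\rkm \adjoint \algtrkm$, whose notion of ``cofibrant in the base'' is ``$K$-colocal,'' whereas the adjunction witnessing (2) passes through $\M \adjoint \algtm$ and then right-localizes on the algebra side. One must check that the clauses of Definition \ref{defn:semi-model-cat} restricted to maps with cofibrant domain agree under both perspectives. This is straightforward once one observes that $T$ is a left Quillen functor with respect to $\M$ (and hence with respect to $\rkm$, since $\rkm$ has the same fibrations and a subclass of the weak equivalences), so that $T$ applied to a cofibrant object of $\M$ is cofibrant in $\algtm$, and $K$-colocal objects are in particular $\M$-cofibrant. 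Once these domain conditions are aligned, every axiom in Definition \ref{defn:semi-model-cat} reads identically from either side, and the equality of semi-model categories follows.
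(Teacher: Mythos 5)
Your proposal is correct and takes essentially the same route as the paper: the paper's proof likewise just reruns the proof of Theorem \ref{lifting.transfer}, noting that Lemma \ref{tk.colocal.eq} holds for semi-model categories (via Barwick's simplicial resolutions) and that a semi-model structure is determined by its fibrations and weak equivalences (Barwick, Lemma 1.7), so that the two structures share the underlying category $\algtm$, the fibrations of $\algtm$, and (by the lemma) the weak equivalences. Your closing paragraph on aligning the cofibrant-domain clauses of Definition \ref{defn:semi-model-cat} is extra diligence the paper leaves implicit (its clarifying sentence before the corollary fixes the reading of statement (2) so that both sides assert the same axioms for the same three classes of maps), and your observations that $K$-colocal objects are $\M$-cofibrant and that $T$ remains left Quillen are consistent with that reading, so there is no gap.
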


Conditions under which (1) holds (hence (2) as well) are provided in Theorem \ref{algtrkm.exists} below. Conditions under which (2) has the structure of a \emph{right} semi-model category (dual to Definition \ref{defn:semi-model-cat}) are provided in \cite{barwickSemi}, but we do not make use of this structure. As conditions regarding semi-model category existence are more easily verified than full model category existence, we prefer to work in the setting of semi-model categories, and the conclusions of Corollary \ref{cor:lifting.transfer} are sufficient for our needs.

\begin{corollary}
Under Assumption \ref{standing.assumptions} suppose either one of the two equivalent conditions in Corollary \ref{cor:lifting.transfer} is true.  Then the functor $T : \M \to \algtm$ sends $K$-colocal equivalences between $K$-colocal objects in $\M$ to $T(K)$-colocal equivalences between $T(K)$-colocal objects in $\algtm$.
\end{corollary}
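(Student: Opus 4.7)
The plan is to translate all the colocal vocabulary into the language of semi-model categories, at which point the statement reduces to the fact that a left Quillen functor between semi-model categories preserves weak equivalences between cofibrant objects.

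The relevant dictionary is direct. By definition, a $K$-colocal object of $\M$ is a cofibrant object of $\rkm$, and a $K$-colocal equivalence is a weak equivalence in $\rkm$. By Corollary \ref{cor:lifting.transfer}, the semi-model categories $\rtkalgtm$ and $\algtrkm$ coincide, so a $T(K)$-colocal object of $\algtm$ is a cofibrant object of $\algtrkm$; similarly, by Lemma \ref{tk.colocal.eq} a $T(K)$-colocal equivalence in $\algtm$ is the same thing as a weak equivalence in $\algtrkm$.

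The next step is to observe that the hypothesis that $\algtrkm$ is semi-admissible over $\rkm$ means precisely that $U \colon \algtrkm \to \rkm$ preserves and reflects fibrations and trivial fibrations. Consequently, its left adjoint $T \colon \rkm \to \algtrkm$ is a left Quillen functor between semi-model categories. In particular, $T$ preserves cofibrations, and hence sends cofibrant objects of $\rkm$ to cofibrant objects of $\algtrkm$; via the dictionary, $T$ takes $K$-colocal objects of $\M$ to $T(K)$-colocal objects of $\algtm$.

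Now let $f \colon A \to B$ be a $K$-colocal equivalence between $K$-colocal objects in $\M$. Under the dictionary, $f$ is a weak equivalence in $\rkm$ between cofibrant objects, so by Ken Brown's Lemma, which remains valid in the semi-model setting (see, for example, \cite{barwickSemi} and \cite{spitzweck-thesis}), $T(f)$ is a weak equivalence in $\algtrkm$, i.e.\ a $T(K)$-colocal equivalence in $\algtm$; and by the previous paragraph its endpoints $T(A), T(B)$ are $T(K)$-colocal. The only subtle point is verifying that Ken Brown's Lemma applies even though $\algtrkm$ is only a semi-model category, but this is standard once one notes that the relevant factorization (of the map $A \sqcup B \to B$ as a cofibration followed by a trivial fibration) exists in $\rkm$ and that $T$ preserves such factorizations.
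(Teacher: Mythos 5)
Your proposal is correct and follows essentially the same route as the paper: identify $K$-colocal equivalences between $K$-colocal objects with weak equivalences between cofibrant objects in $\rkm$ (and likewise in $\rtkalgtm = \algtrkm$ via Corollary \ref{cor:lifting.transfer}), note that semi-admissibility makes $T$ left Quillen, and apply Ken Brown's Lemma in the semi-model setting, which the paper handles by citing \cite{hovey} (1.1.12) and \cite{fresse-book} (12.1.7). Your only loose phrase is that ``$T$ preserves such factorizations''---the standard argument instead uses that $T$ preserves the two trivial cofibrations with cofibrant domain produced by factoring $A \sqcup B \to B$ in $\rkm$---but this is a wording issue, not a gap.
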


\begin{proof}
By Corollary \ref{cor:lifting.transfer} the free-forgetful adjunction
\[\nicexy{\rkm \ar@<2pt>[r]^-{T} & \algtrkm = \rtkalgtm \ar@<2pt>[l]^-{U}}\]
is a Quillen adjunction.  By Ken Brown's Lemma (\cite{hovey} 1.1.12 and \cite{fresse-book} 12.1.7) the left Quillen functor $T$ sends weak equivalences between cofibrant objects to weak equivalences between cofibrant objects.
\end{proof}

\section{Admissibility over Right Bousfield Localization}
\label{sec:admissibility}

\begin{definition}
Suppose $\M$ is a category, $C$ is a class of maps in $\M$, and $T$ is a monad on $\M$.
\begin{enumerate}
\item We call the class $C$ \emph{saturated} if it is closed under retracts, pushouts, and transfinite compositions.
\item We call $T$ \emph{finitary} if it preserves filtered colimits.
\end{enumerate}
\end{definition}

\begin{definition}
Suppose $\M$ is a model category, $C$ is a saturated class of maps in $\M$, and $T$ is a monad on $\M$.
\begin{enumerate}
\item We say that $\M$ is \emph{$C$-perfect} \cite{batanin-berger} (Def. 2.1) if weak equivalences in $\M$ are closed under filtered colimits along maps in $C$.
\item We say that $\M$ is \emph{$C$-compactly generated} \cite{batanin-berger} (Def. 2.4) if it is cofibrantly generated and $C$-perfect and if every object is small with respect to $C$ \cite{hovey} (Def. 2.1.3).
\item We say that $T$ is \emph{$C$-admissible on $\M$} \cite{batanin-berger} (Def. 2.9) if for each cofibration (resp., trivial cofibration) $f : A \to B \in \M$ and each pushout of the form
\begin{equation}\label{freemap.pushout}
\nicexy{T(A) \ar[d]_-{Tf} \ar[r] & X \ar[d]^-{g}\\ T(B) \ar[r] & Y}
\end{equation}
in $\algtm$, the underlying map $Ug \in \M$ is in $C$ (resp., $C$ and the class of weak equivalences), where $U : \algtm \to \M$ is the forgetful functor.
\item We say that $T$ is \emph{$C$-semi-admissible on $\M$} \cite{batanin-white} (Def. 2.4) if the previous statement holds whenever $UX$ is cofibrant in $\M$.
\end{enumerate}
\end{definition}

The following observation provides conditions under which the equivalent statements in Corollary \ref{cor:lifting.transfer} are true.

\begin{theorem}\label{algtrkm.exists}
Suppose:
\begin{itemize}
\item $\M$ is a cofibrantly generated model category.
\item $C \subseteq \M$ is a saturated class of maps such that $\M$ is $C$-compactly generated.
\item $K \subseteq \M$ is a set of cofibrant objects such that $\rkm$ exists and is $C$-compactly generated.
\item $T$ is a finitary $C$-(semi-)admissible monad on $\M$.  
\end{itemize}
Then $\algtrkm$ is (semi-)admissible over $\rkm$.
\end{theorem}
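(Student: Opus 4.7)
The plan is to apply the transfer theorem for $C$-compactly generated model categories from \cite{batanin-berger} (or its semi-model variant from \cite{batanin-white}), taking the base to be $\rkm$ rather than $\M$. Since $\rkm$ is $C$-compactly generated and $T$ is finitary by hypothesis, the only remaining condition is that $T$ be $C$-(semi-)admissible on $\rkm$. Once this is established, the transfer theorem produces a (semi-)model structure on $\algtm$ with fibrations and weak equivalences created from $\rkm$ by the forgetful functor $U$, which is by definition $\algtrkm$.

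The cofibration clause of admissibility on $\rkm$ comes for free. By standard right Bousfield localization theory (see \cite{hirschhorn}, Section 3.3), the trivial fibrations of $\rkm$ coincide with those of $\M$, so cofibrations of $\rkm$ equal cofibrations of $\M$; hence for a cofibration $f$ of $\rkm$ and any pushout diagram \eqref{freemap.pushout} in $\algtm$, the conclusion $Ug \in C$ follows directly from the $C$-admissibility hypothesis on $\M$. For the trivial cofibration clause, let $f \colon A \to B$ be a cofibration in $\M$ that is also a $K$-colocal equivalence (in the semi-case, further assuming $UX$ is $\M$-cofibrant, which is the same as $\rkm$-cofibrant since the cofibrations agree). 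The $C$-membership of $Ug$ again follows from $\M$-admissibility; the substantive work is to show that $Ug$ is a $K$-colocal equivalence. I would filter the pushout $g$ in $\algtm$ as a transfinite composition $X = Y_0 \to Y_1 \to \cdots \to Y$ of maps in $C$, with transition maps $Y_n \to Y_{n+1}$ being $\M$-pushouts of functorial constructions on $f$, in the manner of the standard Batanin--Berger and Batanin--White filtration arguments, and argue that each transition map is itself a $K$-colocal equivalence. Then $C$-perfectness of $\rkm$, which is part of its being $C$-compactly generated, propagates this property through the transfinite composition to yield $Ug$ as a $K$-colocal equivalence.

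The main obstacle is tracking $K$-colocal equivalence status, rather than mere $C$-membership, through the transition maps of the filtration. The key technical tool is Lemma \ref{tk.colocal.eq}, which reduces $T(K)$-colocal statements in $\algtm$ to $K$-colocal statements in $\M$ and thereby permits the analysis to be carried out entirely at the $\M$-level using that $f$ is a $K$-colocal equivalence. In the semi-admissible setting, the extra requirement that $UX$ be $\M$-cofibrant supplies precisely the structure needed to run the filtration argument in direct parallel with \cite{batanin-white}, after which the transfer theorem completes the proof.
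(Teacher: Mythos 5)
Your overall skeleton (reduce via the transfer theorem of \cite{batanin-berger}, resp.\ \cite{fresse}/\cite{batanin-white}, to showing $T$ is $C$-(semi-)admissible on $\rkm$) matches the paper, but your analysis of how the distinguished classes of $\rkm$ relate to those of $\M$ is backwards, and this leads you into an unnecessary and unjustified filtration argument. In a \emph{right} Bousfield localization the \emph{fibrations} are unchanged while the weak equivalences enlarge; consequently the trivial fibrations of $\rkm$ are strictly \emph{larger} than those of $\M$ in general, and the cofibrations of $\rkm$ are strictly \emph{smaller} (the $\rkm$-cofibrant objects are the $K$-colocal objects, not all $\M$-cofibrant objects). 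Your claims that ``trivial fibrations of $\rkm$ coincide with those of $\M$, so cofibrations of $\rkm$ equal cofibrations of $\M$'' and that $UX$ being $\M$-cofibrant ``is the same as'' $\rkm$-cofibrant are both false; your cofibration clause survives only because the true statement is an inclusion (every $\rkm$-cofibration is an $\M$-cofibration), which is all that is needed there.

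The decisive observation you miss is that since $\M$ and $\rkm$ have the same fibrations, they have the same \emph{trivial cofibrations} (these are determined by the fibrations via lifting). Hence a trivial cofibration $f$ in $\rkm$ is already a trivial cofibration in $\M$, and $C$-admissibility of $T$ on $\M$ applied to the pushout \eqref{freemap.pushout} immediately gives that $Ug$ lies in $C$ and is a weak equivalence in $\M$, hence a $K$-colocal equivalence; this is the paper's entire proof, with the semi case handled by noting that $UX$ cofibrant in $\rkm$ implies $UX$ cofibrant in $\M$ (the implication goes only in that direction). By instead taking $f$ to be an arbitrary $\M$-cofibration that is a $K$-colocal equivalence, you have enlarged the class of maps to be handled beyond what admissibility on $\rkm$ requires, and for that larger class your proposed filtration is a genuine gap: the assertion that each transition map $Y_n \to Y_{n+1}$ is a $K$-colocal equivalence is exactly the hard point, is nowhere argued, and is doubtful in that generality (pushouts of $K$-colocal equivalences along free maps need not be $K$-colocal equivalences; right localizations are typically not left proper). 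Lemma \ref{tk.colocal.eq}, which you invoke as the key tool, plays no role in this theorem; it compares $T(K)$-colocal and $K$-colocal equivalences and is used for Theorem \ref{lifting.transfer} and Corollary \ref{cor:lifting.transfer}, not here.
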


\begin{proof}
First consider the case of full model categories.  To show that $\algtrkm$ is admissible, by \cite{batanin-berger} (Theorem 2.11) it is enough to show that $T$ is $C$-admissible on $\rkm$.  Suppose $f : A \to B \in \rkm$, and consider the pushout \eqref{freemap.pushout}.
\begin{enumerate}
\item If $f$ is a trivial cofibration in $\rkm$, then it is also a trivial cofibration in $\M$ because $\M$ and $\rkm$ have the same fibrations and hence the same trivial cofibrations.  Since $T$ is $C$-admissible on $\M$, in the pushout \eqref{freemap.pushout} the map $Ug$ is in $C$ and is a weak equivalence in $\M$, hence also a weak equivalence in $\rkm$.
\item If $f$ is a cofibration in $\rkm$, then it is also a cofibration in $\M$.  So $T$ being $C$-admissible on $\M$ implies that the map $Ug$ is in $C$.
\end{enumerate} 
For the semi-model category case, we use the same argument with \cite{fresse} (12.1.4 and 12.1.9) instead of \cite{batanin-berger} (2.11) and assume that $UX$ is cofibrant in $\rkm$, hence also cofibrant in $\M$.
\end{proof}

\begin{corollary}
Under the assumptions of Theorem \ref{algtrkm.exists} in which $T$ is $C$-semi-admissible on $\M$, conditions (1) and (2) in Corollary \ref{cor:lifting.transfer} and the equality \eqref{same.model} are all true.
\end{corollary}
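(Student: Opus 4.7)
The plan is to observe that this corollary is an immediate three-step concatenation of results already established in the excerpt, so no new construction is required. First I would invoke the semi-admissible half of Theorem \ref{algtrkm.exists}: its hypotheses match the standing hypotheses of the corollary verbatim (cofibrant generation of $\M$, $C$-compact generation of both $\M$ and $\rkm$, and a finitary $C$-semi-admissible monad $T$), so its conclusion yields that $\algtrkm$ is semi-admissible over $\rkm$. This is exactly statement (1) of Corollary \ref{cor:lifting.transfer}.

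Next I would feed this into the equivalence (1)$\Leftrightarrow$(2) of Corollary \ref{cor:lifting.transfer}. Since statement (1) has just been verified, statement (2) follows automatically: the right Bousfield localization $\rtkalgtm$ exists as a semi-model category. Finally, the ``furthermore'' clause of that same corollary asserts that whenever either (1) or (2) holds, the two semi-model categories $\algtrkm$ and $\rtkalgtm$ coincide; this is precisely the equality \eqref{same.model}.

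I do not anticipate any serious obstacle, since all the genuine content has been packaged earlier. The $C$-admissibility machinery of \cite{batanin-berger} and its semi-model extension in \cite{fresse} are absorbed in the proof of Theorem \ref{algtrkm.exists}, and the identification of $T(K)$-colocal equivalences with $K$-colocal equivalences on underlying objects is handled by Lemma \ref{tk.colocal.eq} inside Corollary \ref{cor:lifting.transfer}. The role of the present corollary is chiefly expository: it assembles the hypotheses of Theorem \ref{algtrkm.exists} into a single set of checkable conditions under which the clockwise and counter-clockwise routes around diagram \eqref{fundamental.diagram} both exist and agree in the semi-model category setting, which is the form most convenient for the applications in Section \ref{sec:applications}.
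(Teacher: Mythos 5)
Your route is the same as the paper's --- Theorem \ref{algtrkm.exists} supplies condition (1) of Corollary \ref{cor:lifting.transfer}, whose equivalence and ``furthermore'' clause then deliver condition (2) and the equality \eqref{same.model} --- but you skip one step that the paper's proof does not. Corollary \ref{cor:lifting.transfer} is stated \emph{under Assumption \ref{standing.assumptions}}, and that assumption is not verbatim among the hypotheses of Theorem \ref{algtrkm.exists}. In particular, Assumption \ref{standing.assumptions} requires that $\algtm$ be semi-admissible over $\M$ (the \emph{unlocalized} category), whereas Theorem \ref{algtrkm.exists} only hypothesizes that $T$ is finitary and $C$-semi-admissible on $\M$ and concludes something about $\algtrkm$ over $\rkm$. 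Before you may feed (1) into the equivalence of Corollary \ref{cor:lifting.transfer} --- or, for that matter, rely on Lemma \ref{tk.colocal.eq}, which also lives under that assumption --- this must be checked. The paper's proof opens with exactly this verification: semi-admissibility of $\algtm$ over $\M$ is deduced from the $C$-(semi-)admissibility of $T$ together with $C$-compact generation of $\M$, citing \cite{batanin-berger} (2.11) (with \cite{fresse} (12.1.4 and 12.1.9) supplying the semi-model case, as in the proof of Theorem \ref{algtrkm.exists} itself).

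The gap is easily repaired --- one sentence invoking the transfer theorem for $\M$ itself closes it --- and the remainder of your argument (the semi-admissible case of Theorem \ref{algtrkm.exists}, then the equivalence (1) $\Leftrightarrow$ (2) and the coincidence of the two semi-model structures) agrees with the paper's proof. But as written, your claim that the hypotheses ``match verbatim'' conflates what is needed to apply Theorem \ref{algtrkm.exists} with what is needed to apply Corollary \ref{cor:lifting.transfer}; the latter requires the additional input about $\algtm$ over $\M$, not only about $\algtrkm$ over $\rkm$.
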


\begin{proof}
$\algtm$ is admissible over $\M$ by \cite{batanin-berger} (2.11), so Assumption \ref{standing.assumptions} is satisfied.  As $\algtrkm$ is semi-admissible by Theorem \ref{algtrkm.exists}, condition (1) in Corollary \ref{cor:lifting.transfer} is true, hence so are condition (2) and \eqref{same.model}.
\end{proof}

\section{Compact Generation of Right Bousfield Localization}
\label{sec:compactGeneration}

In Theorem \ref{algtrkm.exists}, the assumption that $\rkm$ is $C$-compactly generated means that:
\begin{enumerate}
\item \emph{Every object is small with respect to $C$.}  This is part of the assumptions of $\M$ being $C$-compactly generated.
\item \emph{$\rkm$ is a cofibrantly generated model category.}  For example, if $\M$ is cellular in which every object is fibrant, then $\rkm$ is also a cellular, hence cofibrantly generated, model category by \cite{hirschhorn} Theorem 5.1.1.
\item \emph{The class of $K$-colocal equivalences is closed under filtered colimits along maps in $C$.}  Below we will provide reasonable conditions under which this is true.
\end{enumerate}

\begin{definition}
Suppose $\M$ is a model category with a distinguished set of maps $J$, and $K$ is a set of objects in $\M$.
\begin{enumerate}
\item Define the set of maps
\[\Lambda(K) = \Bigl\{A^{\bullet} \otimes \ddeltan \to A^{\bullet} \otimes \deltan : A \in K,\, n \geq 0 \Bigr\}\]
in which $A^{\bullet}$ is a choice of a cosimplicial resolution of $A$ \cite{hirschhorn} (Def. 16.1.2(1) and 16.3.1(1)).
\item Define the set $\Lambdakbar = \Lambda(K) \cup J$ \cite{hirschhorn} (Def. 5.2.1).
\end{enumerate}
\end{definition}

$\rkm$ can be made cofibrantly generated even if $\M$ is not cellular. One approach is given in \cite{christensen-isaksen} (Theorem 2.6). We provide here another approach, resulting in a combinatorial model structure on $\rkm$.

\begin{proposition}
Suppose $\M$ is a combinatorial model category, $K$ is a set of objects, and every object of $\M$ is fibrant. Then $\rkm$ is a combinatorial model category in which every object is fibrant.
\end{proposition}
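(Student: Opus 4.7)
The plan is to verify the four defining components of the conclusion: (a) the underlying category is locally presentable, (b) the declared classes of weak equivalences and fibrations assemble into a model structure, (c) this model structure is cofibrantly generated by sets satisfying the small object argument, and (d) every object is fibrant. Since $\rkm$ coincides with $\M$ as a category and inherits the same fibrations by the definition of right Bousfield localization, properties (a) and (d) follow at once from the hypotheses on $\M$. So the content lies entirely in (b) and (c).

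For (b) and (c), I would appeal to the right Bousfield localization machinery for tractable combinatorial model categories developed in \cite{barwickSemi}. Since $\M$ is combinatorial, one may assume without loss of generality that its generating cofibrations have cofibrant domains, i.e., that $\M$ is tractable; this is the standard Dugger-type reduction available for combinatorial model categories. Barwick's theorem then produces $\rkm$ in general as a right semi-model category, cofibrantly generated by sets built from $K$ and the generators of $\M$ — concretely, generating trivial cofibrations assembled from $\Lambdakbar = \Lambda(K) \cup J$, and generating cofibrations extracted via the small object argument from a set of $K$-colocal maps together with the original $I$. The hypothesis that every object of $\M$ is fibrant then promotes this right semi-model structure to a full model structure, because the usual obstruction restricting the factorization axiom to maps with cofibrant domain is precisely the need to fibrantly replace targets when comparing the homotopy function complexes $\mapm(X,-)$ for $X \in K$.

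The main obstacle is this last promotion from right semi-model structure to full model structure. Concretely, one must verify that a $\Lambdakbar$-small-object argument produces a factorization of every map (not only those with cofibrant domain) into a trivial cofibration in $\rkm$ followed by a fibration, and that the trivial cofibration so obtained is genuinely a $K$-colocal equivalence. Universal fibrancy of $\M$ is what makes this work: mapping spaces out of the cofibrant resolutions $A^{\bullet}$ of objects of $K$ into arbitrary targets are already the correct homotopy function complexes, and so pushouts and transfinite compositions of maps in $\Lambdakbar$ remain $K$-colocal equivalences without any further cofibrant or fibrant replacement.

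Once the full model structure is in place, combinatoriality of $\rkm$ is immediate: the underlying category is locally presentable, the generating sets produced by the construction are small (since every object of $\M$ is small, by combinatoriality of $\M$), and the small object argument applies verbatim. Every object remains fibrant because fibrations in $\rkm$ agree with fibrations in $\M$.
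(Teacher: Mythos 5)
Your high-level outline runs parallel to the paper's proof --- both rest on \cite{barwickSemi} for existence of the model structure and on the all-objects-fibrant hypothesis for the upgrade from a right semi-model structure --- but two of your central steps are wrong. First, the claim that one may assume $\M$ is tractable ``without loss of generality'' via a Dugger-type reduction is not valid. Tractability (generating sets with cofibrant domains) is a genuinely additional hypothesis on a combinatorial model category, not known to be automatic, and Dugger's theorem only produces a \emph{Quillen equivalent} presentation --- a different underlying category --- rather than new generators for $\M$ itself. Since the proposition asserts a model structure on the same underlying category as $\M$, right-localizing a Quillen-equivalent presentation proves nothing here. The paper avoids this issue entirely: it quotes Barwick's Proposition 5.13, applied under exactly the hypotheses of the proposition ($\M$ combinatorial, every object fibrant), for existence, then cites \cite{barwickSemi} (Lemma 5.8) for the characterization of cofibrations, and \cite{christensen-isaksen} (Lemma 2.3) --- which again needs only that all objects are fibrant --- for cofibrant generation.

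Second, your description of the localized structure is dualized incorrectly, and the verification you propose would fail. In $\rkm$ the fibrations, and hence also the trivial cofibrations, are \emph{unchanged} from $\M$; so the (trivial cofibration, fibration) factorization is inherited verbatim from $\M$, the generating trivial cofibrations are just $J$, and $\Lambdakbar = \Lambdak \cup J$ is the set of generating \emph{cofibrations} --- you have these two roles swapped. Correspondingly, the factorization that is restricted in a \emph{right} semi-model structure is (cofibration, trivial fibration), available for maps with fibrant codomain; it is not ``(trivial cofibration, fibration) for maps with cofibrant domain'' as in the left semi-model structures of Definition \ref{defn:semi-model-cat}. Finally, the step you identify as the crux --- that pushouts and transfinite compositions of maps in $\Lambdakbar$ remain $K$-colocal equivalences --- is both false and not what needs proving: a map $A^{\bullet} \otimes \ddeltan \to A^{\bullet} \otimes \deltan$ in $\Lambdak$ is typically not a $K$-colocal equivalence at all. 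The correct key point, supplied by Lemma 2.3 of \cite{christensen-isaksen} (which replaces the cellularity-based arguments of \cite{hirschhorn} (5.2.4--5.2.6)), is that when every object is fibrant the $\Lambdakbar$-injective maps are precisely the fibrations that are $K$-colocal equivalences; the small object argument applied to $\Lambdakbar$ then yields the missing (cofibration, trivial fibration) factorizations, and local presentability supplies the smallness, as you say. As written, your proposal is missing this identification and substitutes for it a claim that is not true.
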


\begin{proof}
Existence of the model structure on $\rkm$ is Proposition 5.13 in \cite{barwickSemi}, and all objects are fibrant because the fibrations in $\rkm$ are the same as the fibrations in $\M$. A characterization of the cofibrations in $\rkm$, that matches the characterization in Proposition 5.3.6 of \cite{hirschhorn}, is given in \cite{barwickSemi} (Lemma 5.8). Lastly, \cite{christensen-isaksen} (Lemma 2.3) proves that $\rkm$ is cofibrantly generated, with trivial cofibrations as in $\M$ and generating cofibrations $\overline{\Lambda(K)}$, if all objects are fibrant. The key point here is that cellularity is not required for the arguments regarding $\overline{\Lambda(K)}$ \cite{hirschhorn} (5.2.4-5.2.6) to work.
\end{proof}

\begin{proposition}\label{rkm.cg}
Suppose:
\begin{itemize}
\item $\M$ is a cofibrantly generated simplicial model category with the set $J$ of generating trivial cofibrations, and $C$ is a saturated class of maps in $\M$.
\item $K$ is a set of cofibrant objects in $\M$ such that $\rkm$ is a cofibrantly generated model category with generating cofibrations $\Lambdakbar$. 
\item All the objects in $K \otimes \deltan$ for $n \geq 0$ and all the (co)domains of the maps in $J$ are finite with respect to $C$ \cite{hovey} (7.4).
\end{itemize}
Then $K$-colocal equivalences are closed under filtered colimits along $C$. It follows that $\rkm$ is $C$-compactly generated if all objects are small with respect to $C$ (e.g. if $\M$ is locally presentable).
\end{proposition}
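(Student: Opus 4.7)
The plan is to prove first that $K$-colocal equivalences are closed under filtered colimits along $C$, after which the claim that $\rkm$ is $C$-compactly generated reduces to a formal check of the three defining properties: cofibrant generation (given), $C$-perfectness (the statement just proved), and smallness of every object with respect to $C$ (assumed).

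For the main claim, let $\{f_i \colon A_i \to B_i\}_{i \in \mathcal{I}}$ be a filtered diagram of $K$-colocal equivalences in which every transition map $A_i \to A_j$ and $B_i \to B_j$ lies in $C$, and write $f \colon A \to B$ for the induced map between colimits. Fix $X \in K$; the goal is to show that $\mapm(X, f)$ is a weak equivalence of simplicial sets. The first step is to build a functorial fibrant replacement $L \colon \M \to \M$ by running the small object argument on $J$. Because the domains and codomains of maps in $J$ are finite with respect to $C$, the transfinite composition defining $L$ terminates at $\omega$, and a Fubini argument (swapping the $\omega$-sequential colimit defining $L$ with the filtered colimit over $\mathcal{I}$) yields $L(A) = \colim_i L(A_i)$ and $L(B) = \colim_i L(B_i)$. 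The same induction on ordinal stages, using saturation of $C$ under pushouts, coproducts, and transfinite compositions, shows that the transition maps of the diagram $\{L(A_i)\}$ remain in $C$; this is the technical crux.

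With these colimit identifications in hand, the simplicial enrichment of $\M$, combined with the fibrancy of each $L(A_i)$ and the cofibrancy of $X$, gives $\mapm(X, L(A_i))_n \cong \M(X \otimes \deltan, L(A_i))$. Applying finiteness of $X \otimes \deltan$ with respect to $C$, and using that the transitions in $\{L(A_i)\}$ are in $C$, we compute
\[
\mapm(X, L(A))_n \;=\; \M(X \otimes \deltan, \colim_i L(A_i)) \;=\; \colim_i \M(X \otimes \deltan, L(A_i)) \;=\; \colim_i \mapm(X, L(A_i))_n,
\]
so $\mapm(X, L(A)) = \colim_i \mapm(X, L(A_i))$ as simplicial sets, and analogously for $B$. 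Each $\mapm(X, L(f_i))$ is a weak equivalence of simplicial sets because $f_i$ is a $K$-colocal equivalence between objects whose fibrant replacements detect it on homotopy function complexes. Since filtered colimits of weak equivalences in $\sset$ are weak equivalences, $\mapm(X, f)$ is a weak equivalence, as required.

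The main obstacle is the second step: ensuring that the functorial fibrant replacement $L$ both commutes with the filtered colimit and preserves the property of transition maps lying in $C$. This is where all three closure properties of the saturated class $C$ are used in concert, and where the finiteness hypothesis on the (co)domains of $J$ is essential for keeping the small object argument terminating at $\omega$ so that the Fubini swap is legitimate.
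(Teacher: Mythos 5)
Your overall reduction (perfectness, plus the assumed cofibrant generation of $\rkm$, plus the assumed smallness of all objects) is the right shape, but the step you yourself flag as the technical crux contains a genuine gap, and it is precisely a gap your hypotheses cannot close. You assert that the transition maps of the replacement diagram $\{L(A_i)\}$ remain in $C$ ``by saturation of $C$ under pushouts, coproducts, and transfinite compositions.'' Saturation only manufactures maps in $C$ out of maps already in $C$. At each stage of the small object argument on $J$, the induced transition $L^{(k)}(A_i) \to L^{(k)}(A_j)$ factors as a pushout of the previous transition (fine: that part is in $C$) followed by a map that attaches cells along the maps of $J$ themselves---and nothing in the Proposition puts $J$, or $J$-cell, inside $C$. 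Worse, when $A_i \to A_j$ is not a monomorphism, the induced map on sets of attaching squares is not injective, so the functorial small object argument identifies previously attached cells via fold maps of the form $(\coprod_F E) \cup_{\coprod_F D} D \to E$ (where $j : D \to E$ is in $J$ and $F$ is a fiber of the map of squares), and such maps are not obtainable from $C \cup J$ by the saturation operations at all. Without transitions in $C$ at every stage, the inductive claim that $\M(\mathrm{dom}(j), -)$ commutes with $\colim_i$ at stages $k \geq 2$ is unsupported, so your Fubini identification $L(\colim_i A_i) \cong \colim_i L(A_i)$ does not go through; likewise, ``the small object argument terminates at $\omega$'' needs the domains of $J$ to be finite relative to $J$-cell, which $C$-finiteness does not supply.

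The paper sidesteps exactly this: it does not reprove the closure principle, but applies Hovey's argument (Cor.\ 7.4.2, relativized to $C$ as in Batanin--Berger, Remark 2.2) to the cofibrantly generated model category $\rkm$ itself, whose generating trivial cofibrations are $J$ and whose generating cofibrations are $\Lambdakbar$ by hypothesis. This reduces the Proposition to checking that the (co)domains of $\Lambdakbar = \Lambdak \cup J$ are finite with respect to $C$: for $J$ this is assumed, and for $\Lambdak$ one chooses the cosimplicial resolution $A^{\bullet} = \{A \otimes \deltan : n \geq 0\}$ (legitimate since $\M$ is simplicial and $A$ is cofibrant), whose relevant (co)domains are finite colimits of the $C$-finite objects $A \otimes \deltan$, hence $C$-finite. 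It is telling that your proposal never uses the hypothesis that the generating cofibrations of $\rkm$ are $\Lambdakbar$: that hypothesis is what allows the $K$-colocal equivalences, as the weak equivalences of a cofibrantly generated model structure with $C$-finite generating data, to be controlled by the cited argument. To repair your direct route you would need at minimum an extra hypothesis such as $J\text{-cell} \subseteq C$, together with a replacement construction built inductively along a chain cofinal in the filtered diagram (rather than the objectwise functorial one), to avoid the fold-map problem; as written, the argument does not prove the stated Proposition.
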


\begin{proof}
By Hovey's argument \cite{hovey} (Cor. 7.4.2) (see also \cite{batanin-berger} Remark 2.2) it suffices to show that the domains and codomains of the maps in $\Lambdakbar = \Lambdak \cup J$ are finite with respect to $C$.  This is true for the maps in $J$ by assumption.  For $\Lambdak$ suppose $A \in K$.  Using the simplicial model structure of $\M$, since $A$ is cofibrant in $\M$, by \cite{hirschhorn} (Cor. 16.1.4(1)) a choice of a cosimplicial resolution of $A$ is given by
\[A^{\bullet} = \bigl\{A \otimes \deltan : n \geq 0 \bigr\}.\]
In order for $A^{\bullet} \otimes \ddeltan$ and $A^{\bullet} \otimes \deltan$--both of which are finite colimits of the various $A \otimes \deltan$ \cite{hirschhorn} (Def. 16.3.1(1))--to be $C$-finite, it is enough for $A \otimes \deltan$ to be $C$-finite, which is true by assumption.
\end{proof}

\section{Equivalent Approaches to Preservation of Algebras Under Right Bousfield Localization}
\label{sec:equivalentApproaches}

The next definition is \cite{crt} (Def. 7.9) for right Bousfield localization.  It provides one approach to preservation of monadic algebras under right Bousfield localization.

\begin{definition}\label{def:rk.lifts}
Under Assumption \ref{standing.assumptions} we say that \emph{$R_K$ lifts to the homotopy category of $T$-algebras} if:
\begin{enumerate}
\item There exists a coaugmented endofunctor $c^T : C^T \to \Id$ on $\Ho(\algtm)$.
\item There exists a natural isomorphism $h : R_KU \to UC^T$ such that
\[Uc^T \circ h = cU\]
in $\Ho(\M)$, where $c : R_K \to \Id$ is the counit of the derived adjunction $\Ho(\rkm) \adjoint \Ho(\M)$ and $U$ is the forgetful functor.
\end{enumerate}
\end{definition}

Another approach to preservation of monadic algebras under right Bousfield localization was proposed by the authors in \cite{white-yau2}, from which the following definition can be extracted.

\begin{definition}\label{def:preserves-algebra}
Under Assumption \ref{standing.assumptions}  we say that \textit{$R_K$ preserves $T$-algebras} if:
\begin{enumerate}
\item When $X$ is a $T$-algebra there is some $T$-algebra $\tilde{X}$ that is weakly equivalent in $\M$ to $R_K UX$.
\item In addition, when $X$ is a fibrant $T$-algebra, there is a choice of $\tilde{X}$ in $\algtm$, with $U(\tilde{X})$ colocal in $\M$, there is a $T$-algebra homomorphism $c_X:\tilde{X}\to X$ lifting the colocalization map $q:R_K UX\to UX$ up to homotopy, and there is a weak equivalence $\beta_X:U(\tilde{X}) \to R_K UX$ such that $q\circ \beta_X \cong U c_X$ in Ho$(\M)$.
\end{enumerate}
\end{definition}

A related concept is whether the forgetful functor preserves right Bousfield localization in the following sense.

\begin{definition}\label{def:grso.preserve}
Under Assumption \ref{standing.assumptions} suppose $\rtkalgtm$ exists as a semi-model category.  Then we say that the forgetful functor
\[\nicexy{\rtkalgtm \ar[r]^-{U} & \rkm}\]
\emph{preserves right Bousfield localization} if, given any map $c : R_{T(K)}X \to X \in \algtm$ that is a $T(K)$-colocal equivalence with $T(K)$-colocal domain, the map $Uc \in \M$ is a $K$-colocal equivalence with $K$-colocal domain. 
\end{definition}

We now observe that these three approaches to preservation of algebras under right Bousfield localization are equivalent.  The following omnibus theorem should be compared to \cite{batanin-white} (Theorem 5.6), which deals with different approaches to preservation of algebras under left Bousfield localization.

\begin{theorem}\label{equivalent.preservation}
Under Assumption \ref{standing.assumptions} suppose further that $\algtrkm$ is semi-admissible over $\rkm$.  Then the following statements are equivalent.
\begin{enumerate}
\item The forgetful functor
\[\nicexy{\algtrkm = \rtkalgtm \ar[r]^-{U} & \rkm}\]
preserves weak equivalences and cofibrant objects, in which the equality on the left is from Corollary \ref{cor:lifting.transfer}.
\item $R_K$ preserves $T$-algebras (Def. \ref{def:preserves-algebra}).
\item $R_K$ lifts to the homotopy category of $T$-algebras (Def. \ref{def:rk.lifts}).
\item The forgetful functor $U$ preserves right Bousfield localization (Def. \ref{def:grso.preserve}).
\end{enumerate}
\end{theorem}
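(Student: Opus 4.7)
The plan is to establish the cycle $(1) \Rightarrow (2) \Rightarrow (3) \Rightarrow (1)$ together with the direct equivalence $(1) \Leftrightarrow (4)$, using Lemma \ref{tk.colocal.eq} and Corollary \ref{cor:lifting.transfer} as the main technical inputs. By Corollary \ref{cor:lifting.transfer}, the categories $\algtrkm$ and $\rtkalgtm$ coincide as semi-model categories, so their cofibrant objects agree and are exactly the $T(K)$-colocal $T$-algebras. Semi-admissibility of $\algtrkm$ over $\rkm$ ensures that $U$ preserves and reflects fibrations, and Lemma \ref{tk.colocal.eq} makes $U$ automatically send $T(K)$-colocal equivalences to $K$-colocal equivalences. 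Hence the nontrivial content of both $(1)$ and $(4)$ collapses to the single assertion ``$U$ sends $T(K)$-colocal $T$-algebras to $K$-colocal objects of $\rkm$'', yielding $(1) \Leftrightarrow (4)$ upon unfolding the two definitions.

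For $(1) \Rightarrow (2)$, I would cofibrantly replace $X$ in $\algtrkm$ via the semi-model factorization axiom (valid because the initial object is cofibrant in $\M$ by Definition \ref{defn:semi-model-cat}(5)), and take $\tilde{X}$ to be the result. Condition $(1)$ makes $U\tilde{X}$ a $K$-colocal object, while Lemma \ref{tk.colocal.eq} upgrades the replacement map to a $K$-colocal equivalence $U\tilde{X} \to UX$ in $\M$. Essential uniqueness of right Bousfield colocalization then supplies the weak equivalence $\beta_X \colon U\tilde{X} \to R_K UX$ together with the relation $q \circ \beta_X \cong U c_X$ in $\Ho(\M)$ demanded by Definition \ref{def:preserves-algebra}, with $c_X$ being the replacement map.

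For $(2) \Rightarrow (3)$, the assignment $X \mapsto \tilde{X}$ descends to a well-defined endofunctor $C^T$ on $\Ho(\algtm)$ by essential uniqueness. The maps $c_X$ provide the coaugmentation $c^T \colon C^T \to \Id$, the weak equivalences $\beta_X$ assemble into the natural isomorphism $h \colon R_K U \to U C^T$, and the required compatibility $U c^T \circ h = cU$ in $\Ho(\M)$ is precisely the relation $q \circ \beta_X \cong U c_X$ repackaged.

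Finally, for $(3) \Rightarrow (1)$, let $Y$ be cofibrant in $\algtrkm$, hence $T(K)$-colocal. The natural isomorphism $h_Y$ together with the compatibility $U c^T_Y \circ h_Y = cU_Y$ identifies $UY$ with the $K$-colocal object $R_K UY$ in $\Ho(\M)$, once one shows that $c^T_Y$ itself must be a homotopy-category isomorphism when $Y$ is $T(K)$-colocal. Representing this isomorphism by a zigzag of weak equivalences in $\M$ and using cofibrancy of $UY$ (which $Y$ being cofibrant in $\algtrkm$ will have to furnish) then witnesses that $UY$ is $K$-colocal in $\rkm$. I expect this last arrow to be the main obstacle, because $(3)$ is a purely homotopy-category statement while $(1)$ is a point-set statement about cofibrancy; the delicate points are justifying that $c^T_Y$ is a genuine $\Ho(\algtm)$-isomorphism when $Y$ is $T(K)$-colocal, and promoting the derived-level identification to an actual model-category witness of $K$-colocality of $UY$ rather than merely an $\Ho(\M)$-isomorphism with a $K$-colocal object.
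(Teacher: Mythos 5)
Your overall architecture matches the paper's --- the cycle $(1)\Rightarrow(2)\Rightarrow(3)\Rightarrow(1)$ with $(4)$ tied back to $(1)$ --- and your $(1)\Rightarrow(2)$ is essentially the paper's argument: cofibrantly replace $X$ in $\algtrkm$, use $(1)$ to get colocality of $U\widetilde{X}$, and produce $\beta_X$ by lifting the cofibrant object $U\widetilde{X}$ against the trivial fibration $q$ in $\rkm$ (the paper takes $\widetilde{X} = Q_{K,T}Q_TX$ and draws this lifting square explicitly). But there is a genuine gap exactly where you predicted: $(3)\Rightarrow(1)$ is never proved, and the route you sketch --- first showing that $c^T_Y$ is an isomorphism in $\Ho(\algtm)$ when $Y$ is $T(K)$-colocal --- is not available from Definition \ref{def:rk.lifts} as stated. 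That definition asserts only the existence of \emph{some} coaugmented endofunctor $C^T$ with a compatible natural isomorphism $h \colon R_KU \cong UC^T$; nothing forces $C^T$ to be idempotent, to take values in colocal objects, or to agree with $R_{T(K)}$, so there is no reason $c^T_Y$ should invert on colocal $Y$, and your subsequent zigzag argument has nothing to start from. The paper closes this arrow differently: by citing \cite{crt} (Theorem 7.10(iii)) in the model category case, and for semi-model categories by the observation that preservation of weak equivalences is automatic from Lemma \ref{tk.colocal.eq} (semi-admissibility), while preservation of cofibrant objects is extracted from $h$ together with the equality $\algtrkm = \rtkalgtm$ of Corollary \ref{cor:lifting.transfer}. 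Your proposal neither supplies this argument nor an alternative, so the cycle does not close.

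Two further steps are glossed. First, your claim that $(1)\Leftrightarrow(4)$ collapses ``upon unfolding the two definitions'': the direction $(1)\Rightarrow(4)$ is indeed immediate and is exactly the paper's two-line observation, but $(4)$ only gives information about the particular objects $UR_{T(K)}X$ appearing as domains of colocalization maps. To pass from that to preservation of \emph{all} cofibrant objects you need an additional step --- for instance, that a cofibrant $Y$ admits a section of the trivial fibration $R_{T(K)}Y \to Y$, so that $UY$ is a retract of the $K$-colocal object $UR_{T(K)}Y$, and $K$-colocal objects are closed under retracts. The paper sidesteps this entirely by proving $(4)\Rightarrow(2)$ instead, via the lifting square \eqref{lift-beta}, the $2$-out-of-$3$ property, and \cite{hirschhorn} (3.2.13(2)) to upgrade the $K$-colocal equivalence $\beta$ between $K$-colocal objects to a genuine weak equivalence in $\M$. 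Second, in $(2)\Rightarrow(3)$ you descend $X \mapsto \widetilde{X}$ to an endofunctor of $\Ho(\algtm)$ ``by essential uniqueness,'' but the uniqueness furnished by the colocalization universal property lives in $\Ho(\M)$, whereas functoriality of $C^T$ requires morphisms $\widetilde{X} \to \widetilde{Y}$ in $\Ho(\algtm)$; Definition \ref{def:preserves-algebra} does not make $\widetilde{X}$ cofibrant in $\algtrkm$, so algebra-level lifts are not available from $(2)$'s data alone. The paper instead takes $C^T$ to be the image of the genuinely functorial point-set construction $Q_{K,T}Q_T(-)$, reserving the homotopy-uniqueness argument in $\Ho(\M)$ only for the naturality of $h$.
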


\begin{proof}
(1) $\Longrightarrow$ (2) is proven in Theorem 6.2 in \cite{white-yau2}. To be self-contained, we recall the details in the case when $X$ is a fibrant $T$-algebra. We define $\tilde{X}$ to be $Q_{K,T} Q_T X$ where $Q_T$ (resp. $Q_{K,T})$ denotes the cofibrant replacement functor in $\algtm$ (resp. $\algtrkm$), and the map $c_X$ is simply the composite of cofibrant replacement maps $Q_{K,T} Q_T X \to Q_T X \to X$. The map $\beta_X$ is defined by the following lifting diagram in $\rkm$, where the right vertical map is cofibrant replacement in $\rkm$:
\[
\nicexy@R+.5cm@C+.5cm{\varnothing \ar@{>->}[d] \ar@{>->}[r] & R_KUX \ar@{->>}[d]^-{q}_-{\simeq}\\ UQ_{K,T} Q_T X \ar[r]^-{Uc_X}_-{\simeq} \ar@{.>}[ur]^-{\beta} & UX}
\]
Using that $U$ preserves cofibrant objects we conclude that $UQ_{K,T} Q_T X$ is cofibrant in $\rkm$. It easily follows that $\beta$ is a weak equivalence, since it is a $K$-colocal equivalence between $K$-colocal objects (by the 2-out-of-3 property).

For (2) $\Longrightarrow$ (3) we use the proof above that (1) $\Longrightarrow$ (2). We take $C^T$ to be the image in $\Ho(\algtm)$ of $\tilde{(-)}$ (i.e. of $Q_{K,T} Q_T (-)$), so that $c^T$ is the image in Ho$(\M)$ of $c_X$. To construct $h:R_K U \to U C^T$, consider the following lifting diagram, where $X$ is a fibrant $T$-algebra:
\[
\nicexy@R+.5cm@C+.5cm{\varnothing \ar@{>->}[d] \ar@{>->}[r] & UQ_{K,T} Q_T X \ar@{->>}[d]^-{Uc_X}_-{\simeq}\\ R_KUX \ar[r]^-{q}_-{\simeq} \ar@{.>}[ur]^-{\beta} & UX}
\]
The left vertical map above is a cofibration in $\rkm$, because $q$ is a cofibrant replacement map in $\rkm$. The right vertical map is a trivial fibration in $\rkm$ because $U$ preserves trivial fibrations and $c_X$ is a trivial fibration in $\algtrkm$. It follows that $\beta$ is a weak equivalence in $\M$. We take $h$ to be the image of $\beta$ in Ho$(\M)$, and immediately deduce that $h$ is an isomorphism in Ho$(\M)$ and that $Uc^T \circ h = cU$ in Ho$(\M)$ (see Def. \ref{def:rk.lifts}) by commutativity of the lower triangle above, since $c$ is the image of $q$ in Ho$(\M)$. Furthermore, the lift $\beta$ is unique in Ho$(\M$) by the universal property of right localization, since any other lift would necessarily be a weak equivalence between the same $K$-colocal objects. Finally, this lift is natural in Ho$(\M)$ because if we began with a map $X\to Y$ and constructed lifts $\beta_X$ and $\beta_Y$ then we would in addition construct a homotopy unique lift from $R_K UX$ to $U Q_{K,T} Q_T Y$, so commutativity of the relevant naturality square in Ho($\M)$ follows from uniqueness.

The implication (3) $\Longrightarrow$ (1) is \cite{crt} (Theorem 7.10(iii)) in the case of model categories.  For semi-model categories, to see that the forgetful functor preserves cofibrant objects, we could just use the assumed natural isomorphism $h : R_KU \to UC^T$ in Def. \ref{def:rk.lifts}(2) and the equality $\algtrkm = \rtkalgtm$ in Corollary \ref{cor:lifting.transfer}.  The forgetful functor preserves weak equivalences because $\algtrkm$ is semi-admissible over $\rkm$.  We have shown that the first three statements are equivalent.

To see that (1) $\Longrightarrow$ (4), simply observe that the map $c$ in Def. \ref{def:grso.preserve} is a weak equivalence with cofibrant domain in $\rtkalgtm$.  So (1) guarantees that the map $Uc \in \rkm$ is also a  weak equivalence with cofibrant domain, i.e., a $K$-colocal equivalence with $K$-colocal domain in $\M$.

Finally, we show that (4) $\Longrightarrow$ (2).  Since $\algtrkm = \rtkalgtm$ exists by Corollary \ref{cor:lifting.transfer}, every $T$-algebra $X$ has a functorial cofibrant replacement in $\rtkalgtm$.  In other words, we may take the given functorial factorization
\[\nicexy{\varnothing \ar@{>->}[r] & R_{T(K)}X \ar@{>>}[r]^-{c_X}_-{\simeq} & X \in \rtkalgtm}\]
of the map $\varnothing \to X$ into a cofibration followed by a trivial fibration in $\rtkalgtm$.  We take $R_{T(K)}X \in \algtm$ as our choice of $\xtilde$ in Def.  \ref{def:preserves-algebra}.  Since the above factorization is taken in $\rtkalgtm$, the map $c_X$ is a $T$-algebra map.  Furthermore, (4) guarantees that the underlying map $Uc_X \in \M$ is a $K$-colocal equivalence with $K$-colocal domain.

It remains to show that $\xtilde = R_{T(K)}X$ is weakly equivalent to $R_KUX$ in $\M$.  Consider the diagram
\begin{equation} \label{lift-beta}
\nicexy@R+.5cm@C+.5cm{\varnothing \ar@{>->}[d] \ar@{>->}[r] & R_KUX \ar@{->>}[d]^-{q}_-{\simeq}\\ UR_{T(K)}X \ar[r]^-{Uc_X}_-{\simeq} \ar@{.>}[ur]^-{\beta} & UX}
\end{equation}
in $\rkm$, in which $q : R_KUX \to UX$ is the given functorial cofibrant replacement of $UX$ in $\rkm$.  Since the map $q$ is a trivial fibration and $UR_{T(K)}X$ is cofibrant in $\rkm$, a dotted lift $\beta$ exists, making the lower triangle commute as required.  Since both $Uc_X$ and $q$ are weak equivalences in $\rkm$, so is $\beta$ by the $2$-out-of-$3$ property.  Therefore, $\beta$ is a $K$-colocal equivalence between $K$-colocal objects in $\M$.  So it is actually a weak equivalence in $\M$ by \cite{hirschhorn} (3.2.13(2)), as desired.
\end{proof}

\begin{remark}
In the context of a colored operad in a cofibrantly generated simplicial monoidal model category, the implication (1) $\Longrightarrow$ (4) in Theorem \ref{equivalent.preservation} is a result in \cite{grso}.
\end{remark}

\section{Applications}
\label{sec:applications}

We conclude this paper with numerous applications of Theorem \ref{equivalent.preservation}. First, in \cite{crt} it was proven that certain colocalizations lift to the homotopy category category of modules over a ring spectrum. We can now use Theorem \ref{equivalent.preservation} to deduce that these colocalizations preserve ring and module structure. As the setting of spectra eluded us in \cite{white-yau2}, these are the first general results we are aware of regarding preservation of algebraic structure by right Bousfield localizations for spectra.

Next, several results regarding preservation of algebraic structure by various right Bousfield localizations were proven in \cite{white-yau2} for chain complexes, spaces, equivariant spaces, and the stable module category. We can now use Theorem \ref{equivalent.preservation} to deduce that these right Bousfield localizations lift to homotopy categories of $T$-algebras and that the forgetful functor preserves these right Bousfield localizations.

\subsection{Spectra}

Let $\M$ be the positive stable model structure on symmetric spectra. Let $K$ be a cofibrant spectrum. 

\begin{example}
If $E$ be a connective ring spectrum that is cofibrant in $\M$, then $R_K$ preserves $E$-modules. This follows from the proof of Theorem 7.11 in \cite{crt} and Theorem \ref{equivalent.preservation}, since $R_K$ lifts to the homotopy category of $E$-modules.
\end{example}

\begin{example}
If $K$ is connective, and $E$ is any ring spectrum that is cofibrant in $\M$, then $R_K$ preserves $E$-modules. This follows from the proof of Theorem 7.11 in \cite{crt} and Theorem \ref{equivalent.preservation}, since $R_K$ lifts to the homotopy category of $E$-modules.
\end{example}

\begin{theorem} \label{thm:spectra-preservation}
Let $P$ be a colored operad valued in simplicial sets or symmetric spectra. Let $K$ be a set of cofibrant objects such that $\rkm$ is a monoidal model category and the $K$-colocal objects are closed under smash product (e.g. if $R_K$ is an $\M$-\textit{enriched} colocalization in the sense of Definition 5.2 of \cite{gutierrez-transfer-quillen}). Then $R_K$ preserves $P$-algebras, $R_K$ lifts to the homotopy category of $P$-algebras, and $U$ preserves $R_K$.
\end{theorem}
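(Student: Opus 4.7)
The plan is to apply Theorem \ref{equivalent.preservation}: the three conclusions of the statement are precisely conditions (2), (3), and (4) there (using Corollary \ref{cor:lifting.transfer} to identify $\algtrkm$ with $\rtkalgtm$), so it suffices to verify the remaining condition (1), namely that the forgetful functor $U : \algtrkm \to \rkm$ preserves weak equivalences and cofibrant objects.

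First I would set up the standing hypotheses. Let $T$ be the monad on $\M$ associated to the colored operad $P$. In both the simplicial set and symmetric spectrum settings, $T$ is a finitary monad that is $C$-semi-admissible on $\M$ for a suitable saturated class $C$ (monomorphisms for simplicial sets; the standard class used in Harper-style treatments for spectra). Proposition \ref{rkm.cg} then ensures that $\rkm$ is $C$-compactly generated under the mild finiteness hypotheses on $K \otimes \Delta[n]$ that are automatic in both settings, and Theorem \ref{algtrkm.exists} supplies semi-admissibility of $\algtrkm$ over $\rkm$. In particular the hypotheses of Theorem \ref{equivalent.preservation} are satisfied.

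Preservation of weak equivalences by $U$ is immediate from the definition of the transferred semi-model structure. The main step, and the key obstacle, is preservation of cofibrant objects. A cofibrant $X \in \algtrkm$ is a retract of a transfinite composition of pushouts of free maps $T(f)$, where $f : A \to B$ is a cofibration of $\rkm$. To control $UX \in \rkm$, I would invoke the standard filtration for operadic cell attachments (as in Harper's work on colored operads), whose associated graded pieces are constructed from iterated pushout-products of $f$ against the operation objects $P(n)$ and the previous stage, with symmetric group actions quotiented out. Because $\rkm$ is by hypothesis a monoidal model category and $K$-colocal objects are closed under smash product, each of these pieces is a cofibration in $\rkm$ between $K$-colocal objects; since cofibrations in $\rkm$ are closed under transfinite composition and retracts, $UX$ is cofibrant in $\rkm$.

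The genuinely delicate point will be handling the $\Sigma_n$-quotients in the filtration, which requires that cofibrant objects of $\rkm$ behave well with respect to symmetric group actions. In simplicial sets this holds because cofibrations are levelwise injections, while in the positive stable model structure on symmetric spectra it is precisely the classical flatness argument that motivates the positive model structure. These facts transfer from $\M$ to $\rkm$: cofibrations in $\rkm$ are a subclass of cofibrations in $\M$, so the $\Sigma_n$-flatness is inherited, and the hypothesis that $K$-colocal objects are closed under smash product keeps the resulting pieces inside the class of cofibrant objects of $\rkm$. With (1) verified, Theorem \ref{equivalent.preservation} immediately yields (2), (3), and (4), completing the argument.
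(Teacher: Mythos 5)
Your overall strategy---enter Theorem \ref{equivalent.preservation} at node (1) by proving that $U$ preserves cofibrant objects via a Harper-style cellular induction---breaks down at its central step, and in a way that no amount of technical polishing can repair from the stated hypotheses alone. In the filtration of a pushout of $T(f)$, the associated graded pieces have the form (enveloping operad entry) $\otimes_{\Sigma_q} f^{\boxprod q}$, and the enveloping operad entries are built from the entries of $P$. The hypotheses you use---$\rkm$ monoidal, $K$-colocal objects closed under smash---control only the $f^{\boxprod q}$ factor; they say nothing about smashing a colocal object with $P(n)$, and closure of colocal objects under smash does not make $P(n)\wedge(\text{colocal})$ colocal when $P(n)$ is not. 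Your induction already fails at the very first cell: the free algebra on a colocal $A$ has underlying object $\coprod_n P(n)\otimes_{\Sigma_n} A^{\wedge n}$, which is not colocal for general $P$. Concretely, take $R_K$ to be the connective-cover colocalization of spectra (the paper's own $n=0$ example, where colocal objects are closed under smash) and $P$ the spectral operad concentrated in arity one with value a cofibrant model of $KU$, so that $P$-algebras are $KU$-modules: the free algebra $KU\wedge A$ on a connective cell $A$ is not connective. Worse, $ku$ is not weakly equivalent to \emph{any} $KU$-module (the Bott class acts invertibly on the homotopy of any $KU$-module), so no argument using only monoidality and smash-closure can establish condition (1) in the spectral-operad case; the $\M$-\emph{enriched} colocalization structure flagged in the theorem's parenthetical, which enters the paper's proof through Theorem 5.8 of \cite{gutierrez-transfer-quillen}, is essential, and your proof never uses it. Your remark that $\Sigma$-flatness is ``inherited'' addresses equivariance but not this colocality problem, which is the actual obstruction.

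The paper's proof enters the equivalence at node (2), not (1): it replaces $P$ by a cofibrant operad $P_\infty$ (via \cite{elmendorf-mandell} for simplicial operads, and via \cite{white-yau1} together with rectification as in \cite{white-yau3} or \cite{pavlov-scholbach-spectra} for spectral ones), takes a fibrant $P_\infty$-algebra $X'$, and applies Guti\'errez's transfer theorem---which exploits the enriched colocalization to put a $P_\infty$-algebra structure directly on $R_K(U(X'))$---to conclude that $R_K$ preserves $P$-algebras; conditions (1), (3), (4) then follow formally from Theorem \ref{equivalent.preservation}. Your write-up omits the rectification step entirely, yet for an arbitrary (non-$\Sigma$-cofibrant) $P$ the transferred semi-model structure and the homotopical control of the filtration you invoke need not exist, so Theorem \ref{algtrkm.exists} cannot simply be quoted for $P$ itself. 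A smaller but real overreach: your appeal to Proposition \ref{rkm.cg} asserts that the finiteness of $K\otimes\Delta[n]$ with respect to $C$ is ``automatic in both settings,'' but an arbitrary set of cofibrant spectra has no such finiteness property. If you want to salvage a cellular argument along your lines, you must add the hypothesis that (a cofibrant replacement of) $P$ is objectwise $K$-colocal---which is exactly how the paper's chain-complex result, Theorem \ref{thm:app-chains}, is formulated---or else follow the paper and derive condition (2) from the enriched colocalization.
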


\begin{proof}
The key ingredient will be Theorem 5.8 in \cite{gutierrez-transfer-quillen}, together with the dual of Theorem 7.6 in \cite{crt}. We have slightly changed the notation from \cite{gutierrez-transfer-quillen} to match our notation. We focus first on the case of simplicial colored operads. Let $\phi:P_\infty \to P$ be a cofibrant replacement in the model category of simplicial operads. By Theorems 1.3 and 1.4 in \cite{elmendorf-mandell}, the categories of $P_\infty$-algebras and $P$-algebras both inherit transferred model structures, and they are Quillen equivalent via $\phi^*$. Thus, preservation for $P$-algebras is equivalent to preservation for $P_\infty$-algebras.

Let $X$ be a $P_\infty$-algebra and let $r:X\to X'$ be fibrant replacement in the model category of $P_\infty$-algebras (so $X'$ is colorwise fibrant as a spectrum). For every set of colors $c_1,\dots,c_n$, the object $R_K(U(X'(c_1))) \otimes \dots \otimes R_K(U(X'(c_n)))$ is $K$-colocal by our hypothesis on $K$-colocal objects. It follows from Theorem 5.8 in \cite{gutierrez-transfer-quillen} that $R_K(U(X'))$ admits a $P_\infty$-algebra structure. Furthermore, $R_K(U(X'))$ is $K$-colocally weakly equivalent to $R_K(U(X))$, by the 2-out-of-3 property, hence weakly equivalent by Theorem 3.2.13 of \cite{hirschhorn}. It follows that $R_K$ preserves $P$-algebras, hence (by Theorem \ref{equivalent.preservation}) that $R_K$ lifts to the homotopy category of $P$-algebras and that $U$ preserves $R_K$. 

For the case of spectral colored operads, one cannot use the results in \cite{elmendorf-mandell}. However, admissibility of $P$ and $P_\infty$ follow from Theorem 8.3.1 in \cite{white-yau1}, and rectification between $P$-algebras and $P_\infty$-algebras can be verified as in Corollary 5.1.1 of \cite{white-yau3} or Theorem 3.4.4 of \cite{pavlov-scholbach-spectra}. The rest of the proof above works without changes, as \cite{gutierrez-transfer-quillen} works for any enriching category, even $\M$ itself.

Lastly, note that if $R_K$ is an $\M$-enriched colocalization (i.e. defined via internal hom objects rather than simplicial mapping spaces) then $\rkm$ is a monoidal model category by Remark 5.3 in \cite{gutierrez-transfer-quillen}, and $K$-colocal objects are closed under the smash product by Lemma 5.6 of \cite{gutierrez-transfer-quillen}.
\end{proof}

\begin{example}
Let $K = \{ \Sigma^{n+1}S^0\}$ where $S^0$ is the sphere spectrum. Then $K$-colocal spectra are precisely the $n$-connective spectra, and $K$-colocalization amounts to taking $n$-connective covers. This colocalization may be defined as an $\sset$-enriched colocalization, but not as an $\M$-enriched colocalization since $K$-colocal objects are not closed under smash product (see Section 7 of \cite{gutierrez-transfer-quillen}). However, for $n = 0$, $K$-colocal objects \textit{are} closed under smash product. Thus, Theorem \ref{thm:spectra-preservation} demonstrates that taking connective covers preserves $P$-algebra structure for every colored operad $P$, that the connective cover colocalization lifts to homotopy categories of $P$-algebras, and that the forgetful functor preserves connective cover colocalizations.
\end{example}

\subsection{Spaces}

Consider the Quillen model structure on $\Top$, the category of compactly generated topological spaces. 

\begin{example}[$n$-connected covers] \label{ex:n-conn-covers}
Let $K = \{S^m\;|\; m>n\}$, so that $R_K(X) = CW_A(X)$ where $A=S^n$ \cite{chacholski-thesis,farjoun}. The $K$-colocal objects are $X$ with $\pi_{\leq n}(X) = 0$, and the $K$-colocal equivalences are maps $f$ with $\pi_{>n}(f)$ an isomorphism. As this set $K$ is closed under smashing with spheres, Theorems 4.3, 4.5, and 9.3 in \cite{white-yau2} demonstrate that the pushout product axiom is satisfied in $R_K(\Top)$. The same is true for any CW complex $A$. For the case $K = \{S^1\}$, the $K$-colocal spaces are precisely those $X$ with $\pi_0(X) = 0$; i.e., $X$ is path connected. Any $E_\infty$-operad $\sO$ has path connected spaces (contractible even), so is entrywise $K$-colocal. It is easy to check that such an operad is in fact $\Sigma$-cofibrant in the $K$-colocal model structure on symmetric sequences, since the fixed-point property of $E\Sigma_n$ guarantees the existence of an equivariant lift in a lifting problem against a $K$-colocally trivial fibration. Theorem 6.2 in \cite{white-yau2} proves that $R_K$ preserves $E_\infty$-algebras (the requisite smallness assumptions are verified in \cite{white-topological}). Theorem \ref{equivalent.preservation} now demonstrates that $R_K$ lifts to the homotopy category of $E_\infty$-algebras in such a way that the forgetful functor $U$ preserves right Bousfield localization.
\end{example}

\subsection{Equivariant spaces}

Let $G$ be a compact Lie group, and let $\M = \Top^G$ denote the category of $G$-equivariant compactly generated spaces, with the fixed-point model structure where a map $f$ is a weak equivalence (resp. fibration) if and only if $f^H$ is a weak equivalence (resp. fibration) for every closed $H < G$. Consider the model structure on operads valued in $\M$ transferred along the free-operad functor from the product model structure on $\M^\Sigma = \prod_{n \geq 0} \M^{\Sigma_n}$, where each $\M^{\Sigma_n}$ has the projective model structure. Note that this is usually the wrong model structure for the study of $G$-equivariant operads (because one wants to study fixed spaces of subgroups of $G\times \Sigma_n$), but this model structure is the natural home for the $G$-equivariant $E_\infty$-operad of \cite{lms}. In this model structure, the cofibrant replacement of the commutative operad $Com$ is an equivariant $E_\infty$-operad, which plays an important role in the search for algebraic models for equivariant spectra. 

We will show a preservation result for this operad. Note, however, that this is the wrong operad to encode complete equivariant commutativity (including norms) in $G$-spectra, because it does not allow for mixing of the $G$-action with the $\Sigma_n$-action. The correct operads to study for norms are the $N_\infty$-operads of \cite{blumberg-hill}.

\begin{example}
Suppose $F$ is a nonempty set of subgroups of $G$ and let $K(F) = \{(G/H)_+ \;|\; H\in F\}$. Colocalization with respect to $K(F)$ preserves algebras over any $G-E_\infty$-operad $\mathscr{E}$, because $\mathscr{E}$ is $\Sigma$-cofibrant in $\prod_n (\Top^{G})^{\Sigma_n}$, so $\mathscr{E}$-algebras inherit a semi-model structure in $\Top^G$ and in $R_{K(F)}(\Top^G)$. Theorem \ref{equivalent.preservation} now demonstrates that the colocalization $R_{K(F)}$ (that focuses on cells $G/H$ for $H\in K(F)$) lifts to the homotopy category of $E_\infty$-algebras and that the forgetful functor preserves $R_{K(F)}$.
\end{example}

\begin{example}
Let $K = \{S^{n+1}\}$, so that $K$-colocal objects are $n$-connected covers. Then both the operad $\mathscr{E}$ and the $N_\infty$ operads of \cite{blumberg-hill} are objectwise $K$-colocal and $\Sigma_n$-free. They therefore satisfy condition $\star^\sO$ from \cite{white-yau2}, and hence their algebras are preserved by taking $n$-connected covers. Theorem \ref{equivalent.preservation} demonstrates that the $n$-connected cover colocalization lifts to the homotopy categories of $E_\infty$ and $N_\infty$-algebras.
\end{example}

\subsection{Chain complexes}

\begin{theorem} \label{thm:app-chains}
Let $k$ be a field of characteristic zero. Then, for any set of cofibrant objects $K$, and any colored operad $\sO$ that is objectwise $K$-colocal, $R_K$ lifts to the homotopy category of $\sO$-algebras and the forgetful functor $U:\alg_{\sO}(\Chk)\to \Chk$ preserves $R_K$.
\end{theorem}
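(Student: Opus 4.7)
The plan is to apply Theorem \ref{equivalent.preservation} by establishing condition (2), that $R_K$ preserves $\sO$-algebras; the equivalence will then deliver conditions (3) and (4), which are precisely the two assertions of the theorem.

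To invoke Theorem \ref{equivalent.preservation} I first verify its standing semi-admissibility hypothesis. Since $\mathrm{char}(k) = 0$, the group algebra $k[\Sigma_n]$ is semisimple, so every object of the projective model category of symmetric sequences in $\Chk$ is $\Sigma$-cofibrant. Consequently the monad $T$ associated to the colored operad $\sO$ is admissible on $\M = \Chk$, so $\alg_{\sO}(\Chk)$ inherits a transferred model structure. Since $\Chk$ is combinatorial and every object is fibrant, $\rkm$ exists and is combinatorial, and Proposition \ref{rkm.cg} supplies a saturated class $C$ making $\rkm$ into a $C$-compactly generated model category. The standard filtration of pushouts along free maps in $\alg_{\sO}$, combined with exactness of $\Sigma_n$-coinvariants in characteristic zero, shows that $T$ is $C$-semi-admissible on $\rkm$; Theorem \ref{algtrkm.exists} then yields that $\alg_{\sO}(\rkm)$ is semi-admissible over $\rkm$.

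The crux is the preservation step. Given a cofibration $A \to B$ in $\Chk$ and a pushout of $T(A) \to T(B)$ along a map $T(A) \to X$ in $\alg_{\sO}$, the successive layers of the canonical filtration of $X \to X \coprod_{T(A)} T(B)$ in $\Chk$ have the form $\sO(n) \otimes_{\Sigma_n} Q_n^{\otimes}$, where $Q_n^{\otimes}$ is built from iterated tensor products of $A$, $B$, and $X$. The objectwise $K$-colocality of $\sO$, together with closure of $K$-colocal equivalences under tensor products with $K$-colocal objects and under $\Sigma_n$-coinvariants (the latter using exactness in characteristic zero), forces each layer to be a $K$-colocal equivalence, and so the transfinite composition is a $K$-colocal equivalence as well. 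This is exactly condition $\star^{\sO}$ of \cite{white-yau2}, and Theorem 6.2 of \cite{white-yau2} then yields that $R_K$ preserves $\sO$-algebras. Applying Theorem \ref{equivalent.preservation} completes the proof. The main obstacle is controlling the colocal behavior of the filtration layers $\sO(n) \otimes_{\Sigma_n} Q_n^{\otimes}$, and it is precisely the exactness of $\Sigma_n$-coinvariants that confines the argument to the characteristic zero setting.
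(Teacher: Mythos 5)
Your overall architecture matches the paper's (establish a semi-model structure on $\sO$-algebras over $R_K(\Chk)$, prove that $R_K$ preserves $\sO$-algebras, then let Theorem \ref{equivalent.preservation} deliver the lift and the statement about $U$), but the crux paragraph contains a genuine gap. The asserted ``closure of $K$-colocal equivalences under tensor products with $K$-colocal objects'' is false, even over a field of characteristic zero: take $K = \{S(n)\}$ with $n > 0$, as in Example \ref{ex:n-conn-cover-chains}. The map $S(0) \to 0$ is a $K$-colocal equivalence (it induces an isomorphism on $H_{\geq n}$), and $S(n)$ is $K$-colocal, yet the tensored map $S(n) = S(0) \otimes S(n) \to 0$ is not a $K$-colocal equivalence. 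The true statement that plays this role, and the genuinely special ingredient behind the theorem, is Theorem 11.7 of \cite{white-yau2}: \emph{every} right Bousfield localization $R_K(\Chk)$ is a monoidal model category. It is the pushout-product axiom in $R_K(\Chk)$ --- not a blanket closure property of colocal equivalences --- that controls the layers $\sO(n) \otimes_{\Sigma_n} Q_n^{\otimes}$, and your proof never establishes or cites it; exactness of $\Sigma_n$-coinvariants in characteristic zero (which correctly reproduces condition $\diamond$, i.e.\ Theorem 11.12 of \cite{white-yau2}) does not supply it.

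Relatedly, your filtration argument aims at the wrong statement. You argue that for an \emph{arbitrary} cofibration $A \to B$ the pushout map $X \to X \coprod_{T(A)} T(B)$ is a $K$-colocal equivalence; if that held, every cofibration of $\sO$-algebras would be trivial. No colocality argument is needed for semi-admissibility over $\rkm$ in the first place, since $\rkm$ and $\M$ have the same trivial cofibrations, so $C$-semi-admissibility on $\M$ transfers for free --- this is exactly the proof of Theorem \ref{algtrkm.exists}, and your route through Proposition \ref{rkm.cg} is a legitimate (if heavier than the paper's) way to meet its compact-generation hypothesis. What preservation (condition $\star^\sO$ and Theorem 6.2 of \cite{white-yau2}) actually requires is that cofibrant $\sO$-algebras have $K$-colocal underlying objects, i.e.\ that the filtration layers are \emph{cofibrations in} $R_K(\Chk)$ between colocal objects, not colocal equivalences. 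This is where objectwise $K$-colocality of $\sO$, the $\Sigma$-cofibrancy coming from condition $\diamond$, and the monoidality of $R_K(\Chk)$ all enter; the paper's proof of Theorem \ref{thm:app-chains} assembles precisely these three citations and then applies Theorem \ref{equivalent.preservation}, as you intended. To repair your proof, replace the false closure claim with an appeal to (or a proof of) Theorem 11.7 of \cite{white-yau2}, and redirect the filtration argument from ``layers are colocal equivalences'' to ``layers are $\rkm$-cofibrations, so the forgetful functor preserves cofibrant objects.''
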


\begin{proof}
Theorem 11.7 in \cite{white-yau2} demonstrates that every right Bousfield localization $R_K(\Chk)$ is monoidal. Theorem 11.12 in \cite{white-yau2} demonstrates that $\Chk$ satisfies condition $\diamond$, which says all symmetric sequences are projectively cofibrant, since $k$ has characteristic zero. Thus, any $\sO$ that is objectwise $K$-colocal is $\Sigma$-cofibrant with respect to $R_K(\Chk)$, so there is a transferred semi-model structure on $\sO$-algebras. The preservation theorem (Theorem 6.2 in \cite{white-yau2}) demonstrates that $R_K$ preserves $\sO$-algebras, and Theorem \ref{equivalent.preservation} provides the required lift of $R_K$ to $\sO$-algebras.
\end{proof}

Let $S(n)$ denote the chain complex that is $k$ in degree $n$ and 0 elsewhere.

\begin{example} \label{ex:n-conn-cover-chains}
Let $K = \{S(n)\}$ for some $n$. Then the $K$-colocal objects are the $X$ such that $H_{<n}(X) = 0$, and the $K$-colocal equivalences are maps $f$ such that $H_{\geq n}(f)$ is an isomorphism. The functor $R_K$ can be viewed as an $n$-connected cover. Suppose $k$ has characteristic zero. For any $n$, $R_K$ lifts to the homotopy category of $\sO$-algebras, for any $E_\infty$-operad $\sO$, by Theorem \ref{thm:app-chains}, because all spaces $\sO(n)$ have $H_i(\sO(n)) = 0$ for all $i$, hence are $K$-colocal. Similarly, $R_K$ lifts to the homotopy category of commutative differential graded algebras when $k$ has characteristic zero.
\end{example}

\begin{example}
Suppose $R_K(\Chk)$ is a monoidal model category and that the $K$-colocalization functor can be chosen to be lax monoidal (e.g. see \cite{gutierrez-transfer-quillen} (5.6)). Then for any colored operad $\sO$, the sequence $R_K\sO$ defined by
\[
(R_K\sO)\smallbinom{d}{[\uc]} = R_K\left(\sO \smallbinom{d}{[\uc]}\right)\]
is a colored operad over $\rkm$. By construction, this colored operad is objectwise $K$-colocal, so the right Bousfield localization $R_K$ lifts to the homotopy category of $R_K\sO$-algebras.
\end{example}

\begin{remark}
The category of simplicial abelian groups has a cofibrantly generated model structure \cite{quillen} in which all objects are fibrant. This category is equivalent to the category of bounded below chain complexes, by the Dold-Kan Theorem. The normalized chains functor $N$ is a natural isomorphism, compatible with the model structures, and is monoidal by \cite{schwede-shipley-equivalences} (4.1). Thus, all results about right Bousfield localizations lifting to categories of algebras in $\Chk$ yield analogous results in the category of simplicial $k$-modules.
\end{remark}

\subsection{Stable module category}

\begin{example}
Let $\M = k[G]$-mod and let $K = \{k\}$ where $k$ has the trivial $G$ action. The colocalization of $\M$ with respect to this $K$ is studied in \cite{beligiannis-reiten} (IV.2.7). Since $k$ is the monoidal unit, the commutative operad is objectwise $K$-colocal. Let $K'$ denote the stable, monoidal closure of $K$, following \cite{barnes-roitzheim-stable}, and let $K''$ be the closure of $K'$ and $\{k[\Sigma_n]\;|\;n=1,2,...\}$. Corollary 12.3 of \cite{white-yau2} demonstrates model structures on commutative and associative algebras in this setting, so Theorem 6.2 in \cite{white-yau2} provides preservation results for such algebraic structures under right Bousfield localization. Theorem \ref{equivalent.preservation} then proves that $R_{K'}$ lifts to the homotopy category of commutative monoids and that $R_{K''}$ lifts to the homotopy categories of commutative monoids and associative monoids. 
\end{example}


\end{document}